\newtheorem{thm}{Theorem}[section]
\newtheorem{lem}[thm]{Lemma}
\newtheorem{lem-def}[thm]{Lemma-Definition}
\newtheorem{cor}[thm]{Corollary}
\theoremstyle{remark}
\newtheorem{rmk}[thm]{Remark}
\theoremstyle{definition}
\newtheorem{dfn}[thm]{Definition}
\newtheorem{conj}[thm]{Conjecture}
\numberwithin{equation}{section}
\newcommand{\quash}[1]{}  
\newcommand{\nc}{\newcommand}
\nc{\on}{\operatorname}
\newcommand{\bF}{{\mathbb F}}
\newcommand{\bR}{{\mathbb R}}
\newcommand{\bZ}{{\mathbb Z}}
\newcommand{\calF}{{\mathcal F}}
\nc{\al}{{\alpha}} \nc{\be}{{\beta}} \nc{\ga}{{\gamma}}
\nc{\ve}{{\varepsilon}} \nc{\Ga}{{\Gamma}} \nc{\la}{{\lambda}}
\nc{\La}{{\Lambda}}
\nc{\ad}{{\on{ad}}}
\nc{\aff}{{\on{aff}}}
\nc{\Aff}{{\mathbf{Aff}}}
\nc{\Bun}{{\on{Bun}}}
\nc{\der}{{\on{der}}}
\nc{\diag}{{\on{diag}}}
\nc{\Fl}{{\calF\ell}}
\nc{\Hol}{{\on{Hol}}}
\nc{\Id}{{\on{Id}}}
\nc{\Ind}{{\on{Ind}}}
\nc{\res}{{\on{res}}}
\nc{\tr}{{\on{tr}}}
\nc{\GSp}{{\on{GSp}}} \nc{\GU}{{\on{GU}}} \nc{\SL}{{\on{SL}}}
\nc{\SU}{{\on{SU}}} \nc{\SO}{{\on{SO}}}
\nc{\four}{{\calF our}}
\def\question#1{{}}
\author{An Huang, Shing-Tung Yau, and Mei-Heng Yueh}
\address{Department of Mathematics\\ Harvard University\\Cambridge, MA 02138.}
\title{Graph invariants from ideas in physics and number theory}
\begin{document}
\begin{abstract}
We study free scalar field theory on a graph, which gives rise to a modified version of discrete Green's function on a graph studied in \cite{CY}. We show that this gives rise to a graph invariant, which is closely related to the 2-dim Weisfeiler-Lehman algorithm for graph isomorphism testing. We complement this invariant by another type of graph invariants, coming from viewing graphs as quadratic forms over the integers. These quadratic forms respect a well-behaved "Wedge sum" of graphs, and appear to capture important graph properties regarding graph embeddings, namely the graph genus and the dual graphs.  
\end{abstract}
\maketitle
\section{Introduction}
The graph isomorphism problem is a long standing problem that is of both theoretical and practical importance, and much effort has been put into the research of this problem. For example, for many types of graphs, there are various known fast algorithms. Many graph invariants have been studied, and put into work. However, as stated in \cite{Fortin}, it is clear that there lacks a uniform and deeper understanding of this problem, thus many issues are at an unclear stage. 

One main motivation of this paper is to try to initiate a new perspective to the study of graphs, and in particular the graph isomorphism problem, from ideas familiar in quantum field theory, with the hope of contributing toward a better understanding. On the other hand, another main point of the paper is to introduce another type of graph invariants, coming from the theory of quadratic forms over $\bZ$, which for the purpose of graph isomorphism testing, seem to complement the physics idea in a certain sense that we will describe. Furthermore, it appears that some of these invariants capture important graph properties regarding graph embeddings, which we hope to be of independent interest, aside from the graph isomorphism problem.

More specifically, in section \ref{sec:PI}, we study one of the simplest quantum field theories defined on a graph\footnote{Here we use the notion quantum field theory in a sense similar to lattice gauge theory: we apply some of its very basic ideas, in a situation where there are only finitely many degrees of freedom.}, namely a real free scalar field theory, with a varying mass parameter. Its two-point correlation function gives us a version of the discrete Green's function. This function showed up in \cite{CY} for different purposes. In section \ref{sec:GI}, we explain that this Green's function directly gives rise to a particular easily computable graph invariant, which turns out to be not stronger than (but essentially very similar to) the 2-dim Weisfeiler-Lehman algorithm for graph isomorphism testing. Although this particular invariant is not sufficient, it indicates that simple ideas from physics may become useful for considerations on the graph isomorphism problem: e.g. there might be many variants of the physics idea, that give rise to more powerful graph invariants for isomorphism testing.

In section \ref{Q}, we introduce another type of graph invariants, by viewing graphs as quadratic forms over the integers: let $A$ be the adjacency matrix of a finite simple graph $G$, and take any polynomial $f$ with integral coefficients, $f(A)$ defines a quadratic form over $\bZ$. It is obvious that the isomorphism type of the quadratic form, is an invariant of the graph isomorphism type. Note that a very different idea of quadratic forms on graphs was explored in \cite{AMMN}. For the well-known examples of pairs of graphs constructed from strongly regular graphs, and the so-called CFI graphs \cite{CFI}, where 2-dim (and in some of the cases we consider, also higher dim) Weisfeiler-Lehman algorithm fails to distinguish graphs, we show extensive computational evidence, regarding how invariants from such quadratic forms could be used to distinguish these graphs in a simple and uniform way. The time complexity of the resulting algorithm in general is probably not polynomial, however it appears to be so for e.g. the CFI graphs,\footnote{There is related study in \cite{Derksen}.} and it should be worth to investigate more: see e.g. Remark \ref{complexity}. Locally, for each prime number $p$, one could view the quadratic form over the $p$-adic integers $\bZ_p$, and there one has the easily computable $p$-adic symbols, which are complete local invariants of the form. The combinatorial meaning of these $p$-adic symbols, in terms of graphs, and the classification of graphs according to these symbols, look interesting. A hint on this is described in Remark \ref{C}, and discussions before it. Furthermore, we initiate a study on the quadratic forms represented by the combinatorial Laplacian of the graph, and in particular show that, there is a commutative monoid structure, on the set of equivalence classes of these forms, given by a well-defined "Wedge sum" of graphs \ref{monoid}. We hope this structure to be useful, in studying the decomposition of graphs. Moreover, based on computer experiments, we propose Conjecture \ref{dual}, and remarks regarding its inverse statement, which aim to characterize precisely, what are the graph properties, that are captured by the quadratic form of the Laplacian: we conjecture these properties are the graph genus $g$, and the dual (multi-)graphs associated to embeddings of the graph into the genus $g$ surface. 

In section \ref{distance}, based on our graph invariant in section \ref{sec:GI}, we construct a distance function among graphs with the same number of vertices. In Appendix \ref{Experiments}, we demonstrate some initial numerical experiments regarding the behavior of this distance function. It appears that this distance function could be used to construct useful algorithms for the alignment problem of "almost isomorphic" graphs, which we plan to discuss in a coming paper \cite{HHYY}.

The graph invariant in section \ref{sec:GI} belongs to the framework of spectral graph theory: it is constructed using eigenspaces of the Laplacian matrix. As stated in \cite{CDST, CRS}, there is a hope to discover very useful invariants from this approach. Our idea is related to the idea of graph angles that is surveyed in \cite{CRS, BW}. Also, \cite{BGM} is of relevance to our idea, where the authors use the eigenspace to constrain the action of the automorphism group of the graph, on the coset space of the eigenspace. 

{\it Acknowledgements.} The authors thank CASTS (Center of Advanced Study in Theoretical Sciences) of National Taiwan University, where part of the work was done during their visit. They also thank Hung-Hsun Chen, Wen-Wei Lin and Paul Horn for their help on some preliminary testing, and thank Fan Chung, Noam Elkies, Alexander Grigor'yan, Jonathan Hanke, Rodrigo Iglesias, Greg Kuperberg, Gregory Minton, Hector Pasten, Jean-Pierre Serre, Arul Shankar, and Baosen Wu for useful discussions. M.-H. Yueh's research is partially supported by the "Graduate Student Study Abroad Program" of Ministry of Science and Technology, Taiwan, R.O.C. under grant number NSC-104-2917-I-009-002. 

\section{Free scalar field theory on a graph} \label{sec:PI}

Let $G$ be a graph with $|G|=n$ vertices, choose an arbitrary labeling of the vertices by $V=\{x_1,x_2, \ldots,x_n\}$, and let $M$ denote its $n\times n$ (combinatorial) Laplacian matrix under this basis: for $i\neq j$, the $i,j$-th entry is equal to $-1$ if there is an edge between $x_i$ and $x_j$, and is equal to $0$ otherwise. The diagonal entries are the degrees of the vertices, so that the sum of any column of $M$ is equal to $0$. From the definition, the matrix $M$ is symmetric. $M$ represents the combinatorial Laplacian operator under the dual basis.  Let $\lambda_k$, $k=0,1, \ldots, m$ denote the set of different eigenvalues of $M$ by increasing order. For each $k$, let the column vectors $\phi_k^1,\ldots,\phi_k^{l_k}$ denote an orthonormal basis of the corresponding eigenspace $E_k$.

Consider an Euclidean real scalar field theory on the graph $G$: the space of fields is then the space of all real valued functions on vertices of $G$, which is an $n$-dimensional real vector space. We write the free field Lagrangian with a mass parameter $u={\rm mass}^2$ in direct analogy with the familiar Lagrangian in the continuous situation:
\begin{equation}
\frak L=\sum_{e\in E}(\nabla_e\phi)^2+u\phi^2, 
\end{equation}
where $\nabla_e$ is the graph gradient with respect to a directed edge $e$, $(\nabla_e\phi)^2$ is independent of the choice of the orientation of $e$, and $E$ is the set of edges of $G$. One can consult \cite{CGY} for these notations. We have the usual Green's formula
\begin{equation}
\int_G (\nabla_e\phi)^2 dx=\int_G \phi \Delta \phi dx, 
\end{equation}
where $\Delta$ is the Laplacian.

As the same with usual quantum field theory (QFT) on a manifold, we consider two-point correlation functions defined by
\begin{equation}\label{2cor}
\left<\phi(x)\phi(y)\right>=\frac{\int \phi(x)\phi(y)e^{-\int \phi (\Delta+u) \phi dx}D\phi}{\int e^{-\int \phi (\Delta+u) \phi dx}D\phi}.
\end{equation}
We allow $x$ and $y$ to be equal, as there will be no short distance problems in our situation. This is a finite dimensional path integral of the type that is often used as toy model to introduce the Feynman rules in physics textbooks, and it is free of divergences. However, in our simple situation here, this is our path integral. We know very well how to evaluate this by undergraduate calculus with familiar result: the denominator equals the determinant of the Laplacian to the power $-\frac{1}{2}$, which cancels with a factor coming from the numerator. Up to a nonzero constant scalar, what is left, is a sum over different eigenvalues of the form
\begin{equation}\label{2cor_fun}
\mathcal{T}^G_u(x,y) \equiv \sum_{k=0}^m \frac{t_k(x,y)}{\lambda_k+u}, 
\end{equation}
which may be viewed as a discrete version of the Fourier transform of the D'Alembert propagator, the familiar result in usual QFT. The individual $t_k(x,y)$ for each eigenvalue may be recovered as residues near different poles of the two-point correlation function, as we vary the parameter $u$.

It is straightforward to check that the function $\left<\phi(x)\phi(y)\right>$ satisfies a discrete version of the quantum equation of motion
\begin{equation} \label{QEoM}
L_x\left<\phi(x)\phi(y)\right>=\delta_{x,y},
\end{equation}
where $L_x$ is the Laplacian operator on coordinate $x$, and the delta function $\delta_{x,y}$ on a graph is given by
\begin{eqnarray*}
\delta_{x,y}=\begin{cases}
0, \ &x\neq y, \\
1, \ &x=y.
\end{cases}
\end{eqnarray*}
Therefore, we call the two-point correlation function as a discrete Green's function. In addition, upon a choice of labeling of vertices, as we have done, Equation \eqref{QEoM} becomes the statement that $\left<\phi(x)\phi(y)\right>$ as a matrix, is the inverse of $M+uI$. So obviously, it determines the graph up to isomorphism.

\begin{rmk}
The two-point correlation function determines the graph up to isomorphism, thus it also determines the QFT on the graph, and therefore all of its correlation functions. This can be viewed as a baby version of Wick's theorem in the graph case.
\end{rmk}

Furthermore, one can then study various operations on graphs, and try to see how the two-point correlation function changes accordingly. This is interesting because, theoretically, it is almost always important to understand how invariants change under important operations. On the other hand, as will be discussed in section \ref{distance}, the two-point correlation function can provide a measure on when two given graphs are considered "almost isomorphic", which may be useful in practice. For example, if we have a large data presented as a big graph, one should expect that the data given may contain a little marginal error, and so being able to make sense of and detect "almost isomorphic" graphs looks to be a practically important problem. 

For example, suppose we delete an edge (adding an edge will be just the opposite, of course) between two vertices $x_1$ and $x_2$, and get a new graph, called $G_2$. Let us try to write down the two-point correlation function for $G_2$ in terms of data of $G$ and the two vertices $x_1$ and $x_2$. From the form of \eqref{2cor}, we know that this operation may only possibly affect the term $e^{\int \phi \Delta \phi dx}$. For this term, at any vertex other than $x_1$ and $x_2$, the action of the Laplacian is unaffected by definition. At $x_1$, the integral $\int \phi \Delta \phi dx$ changes by $\phi(x_1)(\phi(x_1)-\phi(x_2))$, and at $x_2$, the integral changes by $\phi(x_2)(\phi(x_2)-\phi(x_1))$. Therefore, the two-point correlation function for $G_2$ can be expressed as
\begin{equation}\label{2}
\left<\phi(x)\phi(y)\right>_{G_2}=\frac{\int \phi(x)\phi(y)e^{\int \phi \Delta \phi dx}e^{(\phi(x_1)-\phi(x_2))^2}D\phi}{\int e^{\int \phi \Delta \phi dx}e^{(\phi(x_1)-\phi(x_2))^2}D\phi}.
\end{equation}
Again, the above can be explicitly calculated by Gaussian integrals, and one may then compare it with the two-point correlation function of $G$, and analyze the difference in various situations. One elementary observation is that, roughly speaking, difference of values of eigenfunctions at vertices $x_1$ and $x_2$ contribute to the difference of two-point correlation functions. Furthermore, the two-point correlation function is more sensitive to the difference at smaller eigenvalues. This is consistent with the physics picture: smaller eigenvalues correspond to lower energy modes, and if the low energy modes for two graphs are close, then we have a sense that these two graphs are close to each other.

\begin{rmk}
The individual functions $t_k(x,y)$ will change in a more complicated manner, and probably one should not expect a particularly nice formula for the change of $t_k(x,y)$ similar to \eqref{2}, because e.g. even the number of distinct eigenvalues and the dimension of eigenspaces may jump, and there may be complications from cross terms. The combination $\left<\phi(x)\phi(y)\right>$ takes into account all of these and the change of it can be presented by the simple formula above.

It looks quite possible that one may study more elaborated quantum field theories on a general graph, especially with the help of various topological and geometrical concepts for graphs that are developed for graphs recently \cite{K0, GLMY, GLMY2, HY}.
\end{rmk}

\section{A graph invariant} \label{sec:GI}

Suppose we have another graph $G_1$ with $n$ vertices, and upon a choice of an arbitrary labeling of the vertices, we get another Laplacian matrix $M_1$. The problem of whether $G$ and $G_1$ are isomorphic graphs, amounts to the linear algebra question of whether there exists a permutation matrix $P$, such that $P^\top MP=M_1$. (Note that $P^\top=P^{-1}$.) In spectral graph theory, people study the real spectrum of $M$, as an invariant of the graph under isomorphisms, however, the spectrum itself is not sufficient for the graph isomorphism problem. Two graphs can have the same real spectrum but fail to be isomorphic, and these are called cospectral graphs. On the other hand, the eigenfunctions contain much more information than just the eigenvalues. The apparent question of dealing with the eigenfunctions or eigenspaces is that they are not preserved under graph isomorphisms, but instead, the eigenspaces also transform by permutations. So, in order to use them appropriately in the graph isomorphism problem, one needs to find suitable invariants associated with the eigenfunctions.

We denote $t_k(x,y)=\sum_{i=1}^{l_k}\phi_k^i(x)\phi_k^i(y)$, and $T(x,y)=\left<t_0(x,y), \ldots, t_m(x,y)\right>$. It is obvious that the vector function $T(x,y)$ does not depend on the choice of the orthonormal basis, and it can be constructed directly from the graph Laplacian independent of the choice of a labeling of vertices, therefore it is an intrinsically defined function on $G\times G$. The set of $1\times (m+1)$ vectors $T(x,y)$ counting multiplicity, marked by each corresponding eigenvalue, where $x,y$ range among all pairs of vertices of $G$, is therefore an invariant of the graph, which we denote by $ST$. This invariant is clearly polynomial time computable, and furthermore the elements of this set can be ordered in order for comparisons. In the following, we explain how this invariant arises directly from the free scalar field theory, and how it is related to the 2-dimensional Weisfeiler-Lehman algorithm.

\begin{rmk}
The above method is linear algebra that can also work for suitable variations of the Laplacian matrix, for example, the normalized Laplacian. Furthermore, the discussion can actually be applied to more general situations, such as multi-graphs.
\end{rmk}

As the two-point function matrix is the inverse of $M+uI$, by the adjugate matrix formula of an inverse matrix, we have
\begin{equation}
\left<\phi(x)\phi(y)\right>=\frac{(-1)^{x+y}A_{y,x}}{\det(M+uI)}, 
\end{equation}
where $A_{y,x}$ is the $y,x$-th cofactor of $M+uI$, which is a polynomial in $u$ of integral coefficients of degree less than $n$. Since our discrete Green's function can be written as an integral of the heat kernel which is positive, one expects $\left<\phi(x)\phi(y)\right>$ to be positive. In fact, one has the following stronger fact. 
\begin{lem}\label{positivity}
All coefficients of the polynomial $(-1)^{x+y}A_{y,x}$ are positive.
\end{lem}
\begin{proof}
This is a simple verification by induction.
\end{proof}

We consider the graph invariant given by the set of values (actually a set of functions of $u$) of the two-point correlation function, counting multiplicities. We have
\begin{equation}\label{f}
\left<\phi(x)\phi(y)\right>=\sum_{k=0}^m\sum_{i=1}^{l_k}\frac{\phi_k^i(x)\phi_k^i(y)}{\lambda_k+u}, 
\end{equation}
and by basic linear algebra, more generally,
\begin{equation}\label{g}
(M+uI)^{\alpha}=\sum_{k=0}^m\sum_{i=1}^{l_k}(\phi_k^i(x)\phi_k^i(y))(\lambda_k+u)^{\alpha}, 
\end{equation}
for any $\alpha\in \bR$. Note that one can take such arbitrary powers of a positive semi-definite matrix.

Therefore, if for two graphs $G$ and $G_1$, the invariant we are considering are the same, it will mean that there exists a permutation $Q$ of $n^2$ elements acting linearly on $n\times n$ matrices by permuting the corresponding elements, such that
\begin{equation}
Q\left<\phi(x)\phi(y)\right>_G=\left<\phi(x)\phi(y)\right>_{G_1}.
\end{equation}
By the above equation combined with taking residues of \eqref{f}, we have, for every $k$,
\begin{equation}
Q\left<\sum_{i=1}^{l_k}\phi_k^i(x)\phi_k^i(y)\right>_G=\left<\sum_{i=1}^{l_k}\phi_k^i(x)\phi_k^i(y)\right>_{G_1}.
\end{equation}
Therefore by \eqref{g}, we have
\begin{equation} \label{r}
Q(M+uI)^{\alpha}=(M_1+uI)^{\alpha},
\end{equation}
for all $\alpha\in \bR$.

\begin{rmk}
Conversely, one convinces oneself easily that, if \eqref{r} holds, then the pair of graphs are cospectral, and our graph invariant takes the same value for the pair.
\end{rmk}

\eqref{r} gives interesting identities. e.g. Taking $\alpha=0$, one derives that $Q$ preserves the diagonal. Taking $\alpha$ to be positive integers, and $u=0$, one gets infinitely many identities with more or less clear combinatorial meaning.

On the other hand, as it is clear from the above derivation, the set of $1\times (m+1)$ vectors $T(x,y)$ marked by eigenvalues, which we denoted by $ST$, as an invariant of the graph, is equivalent to the above set of values of two-point correlation functions.

\begin{rmk}
It may be expensive to compute the cofactors $A_{y,x}$ as a polynomial in $u$. However, one can instead take $n$ different values of $u$, and compute the corresponding $n$ values of $A_{y,x}$, which determine $A_{y,x}$ uniquely as it is a polynomial of degree less than $n$. This can be done quickly, and will be used in section \ref{distance} for constructing practically computable distance functions.
\end{rmk}

It turns out that, \eqref{r} is a consequence of the 2-dimensional Weisfeiler-Lehman algorithm, as shown in Theorem 3 of \cite{AIP}. It is not yet clear to us if 2-dimensional Weisfeiler-Lehman is strictly stronger than \eqref{r}, and if so, to what extent. As a consequence, the strongly regular graphs of a given type, and the famous pairs of graphs constructed in \cite{CFI} (which we will refer to as CFI graphs in the following) have the same $ST$ invariants. On the other hand, it is conceivable that a variation of the physics construction (e.g. considering the set of values of $2k$ point functions) may be closely related with higher dimensional Weisfeiler-Lehman algorithm, and its variations. 

\section{Quadratic form invariants for graphs}\label{Q}

Let $A$ be the adjacency matrix of a graph $G$, and $f\in\bZ$[$x$] a polynomial with integral coefficients. As a permutation matrix lies in $GL(n,\bZ)$, the isomorphism class of the quadratic form over $\bZ$, represented by $f(A)$, is an invariant of the graph.

Suppose $G$ and $G_1$ are isomorphic graphs, then there exists a permutation matrix $P$, such that $P^TAP=A_1$, where $A_1$ is the adjacency matrix of $G_1$. As $P\in GL(n,\bZ)\cap O(n,\bR)$, it is automatic that $P^Tf(A)P=f(A_1)$ for all polynomials $f\in\bZ$[$x$]. On the other hand, we have the following very easy observation:

\begin{lem}\label{Obs}
Suppose there exists $P\in GL(n,\bZ)\cap O(n,\bR)$, such that $P^TAP=A_1$, then $G$ is isomorphic to $G_1$.
\end{lem}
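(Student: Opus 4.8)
The plan is to pin down exactly what the group $GL(n,\bZ)\cap O(n,\bR)$ is, and then to exploit the $0/1$ structure of adjacency matrices. First I would prove the elementary fact that every $P\in GL(n,\bZ)\cap O(n,\bR)$ is a \emph{signed permutation matrix}. The key point is that each column of $P$ is an integer vector of Euclidean length $1$ (since $P\in O(n,\bR)$ forces the columns to be orthonormal). But an integer vector $v$ with $\sum_i v_i^2=1$ must have exactly one nonzero entry, equal to $\pm1$; hence each column of $P$ is $\pm e_{\sigma(i)}$ for some index $\sigma(i)$, and orthogonality of distinct columns forces these indices to be distinct. Thus $P=\Pi D$ for a genuine permutation matrix $\Pi$ and a diagonal sign matrix $D=\diag(\epsilon_1,\dots,\epsilon_n)$ with $\epsilon_i\in\{\pm1\}$. (Note that this direction does not even need $\det P=\pm1$ separately; it falls out of the column analysis, using only integrality of $P$ and orthogonality.)

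Second, I would substitute this factorization into the hypothesis. Since $D^\top=D$, we obtain
\[
A_1 = P^\top A P = D\,(\Pi^\top A\Pi)\,D = D B D,
\]
where $B:=\Pi^\top A\Pi$ is itself the adjacency matrix of a graph isomorphic to $G$, so $B$ is symmetric with entries in $\{0,1\}$ and zero diagonal. Entrywise this reads $(A_1)_{ij}=\epsilon_i\epsilon_j B_{ij}$.

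The crux --- and really the only content beyond bookkeeping --- is the third step, where I use that $A_1$ is again an adjacency matrix. Suppose $\{i,j\}$ is an edge of $B$, i.e. $B_{ij}=1$. Then $(A_1)_{ij}=\epsilon_i\epsilon_j\in\{\pm1\}$; but $A_1$ has all entries in $\{0,1\}$, so the value $-1$ is forbidden, forcing $\epsilon_i\epsilon_j=1$ and hence $(A_1)_{ij}=1=B_{ij}$. When $B_{ij}=0$ both sides vanish. Therefore $A_1=B=\Pi^\top A\Pi$ with $\Pi$ a genuine permutation matrix, which is precisely the assertion that $G\cong G_1$.

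I expect the only subtlety to be making the first step airtight: one must genuinely invoke both conditions on $P$ (integrality to restrict the entries, orthogonality to fix the norm) in order to conclude the signed-permutation form, since dropping either hypothesis admits counterexamples. By contrast, the sign obstruction in the last step dissolves for free exactly because adjacency matrices carry no negative entries, so one never needs to track the $\epsilon_i$ globally (for instance, their consistency along connected components); it suffices that no forbidden $-1$ can appear.
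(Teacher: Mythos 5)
Your proof is correct and follows essentially the same route as the paper's: factor $P$ as a signed permutation matrix (permutation times diagonal $\pm1$ matrix), then use the fact that adjacency matrices have no negative entries to rule out any sign $-1$ appearing, forcing $A_1=\Pi^\top A\Pi$. The only difference is cosmetic --- you spell out why an integer orthogonal matrix must be a signed permutation (integer unit columns), a fact the paper asserts without argument.
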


\begin{proof}
As $P\in GL(n,\bZ)\cap O(n,\bR)$, $P$ consists of orthonormal rows, where each row has exactly $1$ nonzero element, which is either $1$ or $-1$. Therefore, there exists a diagonal matrix $D$, whose diagonal entries are either $1$ or $-1$, such that $DP$ is a permutation of $n$ elements. So, $D^TAD$ is obtained from $A_1$ via a permutation. As all nonzero entries of $A$ and $A_1$ are positive, this forces $D^TAD=A$.
\end{proof}

Here is an heuristic idea why such invariants might be useful for graph isomorphism problem: suppose $G$ and $G_1$ are not isomorphic, and suppose for any $f\in\bZ$[$x$], $f(A)$ and $f(A_1)$ represent the same quadratic form over the integers, then there must exist a $T_f\in GL(n,\bZ)$ to transform one form to another. On the other hand, in view of lemma \ref{Obs}, $T_f$ cannot be an orthogonal matrix, so $f(A)$ and $f(A_1)$ being isomorphic for one $f$, does not seem to directly imply the same statement for any different $f$, which is rather hard to imagine. So intuitively, if $G$ and $G_1$ are not isomorphic, it looks reasonable hope that at least for some $f$, $f(A)$ and $f(A_1)$ will represent different quadratic forms over $\bZ$.

On the other hand, for pairs of graphs that the ST invariant cannot distinguish, they tend to be graphs which are very regular: e.g. the strongly regular graphs, and the CFI graphs. As both types of graphs are highly constrained, it might be reasonable to hope that, some very simple $f$ might provide quadratic forms, which are sufficient to tell apart all these graphs. We provide some extensive computational evidence in the following, that suggests this hope might be true in a strong sense: e.g., for any generic integer $k$, $f=(x+k)^2$ seems to do the job.\footnote{Actually, we also observe in computer experiments that, the canonically defined combinatorial Laplacian seems to do the job as well, and that also looks much quicker to compute.}

To compare two nondegenerate quadratic forms over $\bZ$ with the same discriminant, one can first compare the forms over the $p$-adic integers $\bZ_p$ for each prime $p$ dividing the discriminant, where the so-called $p$-adic symbols are complete local invariants, and are very easy to compute. If the two forms are equivalent over $\bZ_p$ for each such $p$, then they are said to be in the same genus. If the forms are positive definite, and have large dimension, and large discriminant, a genus often contains a huge number of forms, as predicted by the Smith-Minkowski-Siegel mass formula \cite{Siegel}. On the other hand, if the forms are indefinite, and the dimension is at least 3, then there is a so-called "spinor genus" that refines the genus, and is easily computable, and is a complete invariant that determines the form over $\bZ$ \cite{CS}.

At the time we write this article, we do not know of any existing software, that conveniently compares the spinor genus of indefinite forms, therefore we use positive (semi-)definite $f(A)$ in computer experiments described in the following.

We use Magma to check the quadratic forms represented by $(A+mI)^2$ for strongly regular graphs\footnote{The data of adjacency matrices of strongly regular graphs can be downloaded at \url{http://www.maths.gla.ac.uk/~es/srgraphs.php}}, where $m$ is chosen to be $0$ or $2$, and in order to test degenerate cases, we also choose $m$ to be negative of the eigenvalues of $A$. The source data of strongly regular graphs are classified according to the number of vertices $n$, the degree $k$, the number of common neighbors for each pair of adjacent vertices $\lambda$, and the number of common neighbors for each pair of non-adjacent vertices $\mu$. Such set of strongly regular graphs is denoted by ${\rm srg}(n,k,\lambda,\mu)$. The number of distinct quadratic forms $(A+mI)^2$ with respect to ${\rm srg}(n,k,\lambda,\mu)$ is written in Table \ref{tab:srg}. The number of distinct graphs for each equivalence class of quadratic forms is also recorded in the square brackets in Table \ref{tab:srg}. The Magma code for checking isomorphism of two quadratic forms is written in Appendix \ref{code:magma}.  

 It is well known that graphs in ${\rm srg}(n,k,\lambda,\mu)$ all share the same spectrum, and the adjacency matrix of any graph in ${\rm srg}(n,k,\lambda,\mu)$ satisfies the identity
\[
A^2 + (\mu-\lambda) A + (\mu-k)I = \mu J,
\]
where $J$ is the all-ones matrix of dimension $n$. 

In particular, when $\lambda=\mu$, $A^2$ will be the same for any graph in ${\rm srg}(n,k,\lambda,\mu)$. So in this case, we test the form $(A+2I)^2$. When $\lambda \neq \mu$, we test the form $A^2$.

According to Table \ref{tab:srg}, the strongly regular graphs for which $\lambda \neq \mu$ can all be distinguished by the quadratic form $A^2$. The strongly regular graphs for which $\lambda = \mu$ can all be distinguished by the quadratic form $(A+2I)^2$. 

For some degenerate cases, meaning we choose $m$ to be the negative of an eigenvalue of $A$, the quadratic form $(A+mI)^2$ fails to distinguish every strongly regular graph of a given type ${\rm srg}(n,k,\lambda,\mu)$, especially for which the multiplicity of eigenvalue $-m$ is large.

Furthermore, we compute the $p$-adic symbols\footnote{The format of the $p$-adic symbols is according to \cite{Conway}.} of the quadratic form $A+I$, for strongly regular graphs. The result is written in Table \ref{tab:p-adic}. In addition, we use Sage to compute the $p$-adic symbols of the quadratic form $A$ for strongly regular graphs. The result is written in Table \ref{tab:p-adic_srg} and Table \ref{tab:p-adic_srg2}. The Sage code for computing $p$-adic symbols of quadratic forms is written in Appendix \ref{code:sage_padic}. One sees that the discriminant group ($p$-adic symbols) tends to have only a few possibilities, for each ${\rm srg}(n,k,\lambda,\mu)$.

 To construct CFI regular graphs\footnote{We actually tested also non-regular CFI graphs, which behave in a similar way.}, first we use GENREG\footnote{The software can be downloaded at \url{http://www.mathe2.uni-bayreuth.de/markus/reggraphs.html}.} \cite{M1999} to generate the set of all $k$-regular graphs of $n$ vertices ${\rm reg}(n,k)$. For each graph in ${\rm reg}(n,k)$, we construct the corresponding CFI pair. For convenience, we denote the set of all the CFI pairs with respect to ${\rm reg}(n,k)$ by ${\rm CFI}({\rm reg}(n,k))$. 

Similarly, we use Sage to compute the $p$-adic symbols of the quadratic form $A$ for CFI graphs of vertex number less than or equal to $100$, for which $\det A\neq 0$. The result is written in Table \ref{tab:p-adic_CFI} and Table \ref{tab:p-adic_CFI2}. According to the result, all the tested CFI pairs can be distinguished by the $2$-adic symbol of the quadratic form $A$. There are some recent related studies on this in the literature, see e.g. \cite{Derksen} and \cite{DB}. On the other hand, for odd primes $p$, the $p$-adic symbols of each CFI pair are identical.  

Lastly, Figure \ref{fig:CFI} shows the relationship between the number of vertices of ${\rm CFI}({\rm reg}(n,k))$, for $n=4, \ldots, 12$, and the number of distinct prime factors of $\det(A+2I)$. From Figure \ref{fig:CFI}, we observe that the number of distinct prime factors is very small. In fact, the largest prime factor is also very small compared to the determinant, which we do not display in the figure.  

We can try to read off the combinatorial meaning of some information contained in these $p$-adic symbols, directly from definitions. e.g., apparently, there is a bijection from the kernel of $A$ mod 2, as a vector space over the finite field $\bF_2$, to the set of subsets of the vertex set of the graph, such that every vertex of the graph is connected with an even number of vertices in the subset, whereas the dimension of the kernel of $A$ mod 2, is the simplest piece of information contained in the 2-adic symbol of $A$. We hope a better understanding of the combinatorial meaning of these $p$-adic symbols, will help us understand the computer experimental results for them.\footnote{We thank Noam Elkies and Jean-Pierre Serre for some very preliminary discussions on these issues.}

\begin{rmk}\label{C}

One can see from Table \ref{tab:srg} and Table \ref{tab:p-adic_srg} that, the 4-element set ${\rm srg}(28, 12, 6, 4)$ is split into a subset of 3 graphs, and a subset of a single graph, by either the degenerate quadratic form $(A+2I)^2$ over $\bZ$, or the 2-adic symbol for $A$. One can check that, the one graph that is singled out in either way, is exactly the line graph of the complete graph $K_8$. So the other 3-element subset is the set of the so-called Chang graphs \cite{Chang}. 

\end{rmk}

From the experiments, we see that for complete sets of strongly regular graphs parametrized by four parameters $n,k,\lambda,\mu$, and for all $p,q\in\bZ$ that we have tested, the forms $A+pI+qJ$ always seem to have only a few different types of discriminant groups. On the other hand, for generic $p,q$, the isomorphism class of the form $A+pI+qJ$ over $\bZ$ seems to be always able to distinguish every graph in the set. We do not include all these detailed test results in this article, due to space.

Locally, these test results imply that the information of the $p$-adic symbols, in these cases of strongly regular graphs, is very strongly constrained by the 4 parameters.

From a number theory point of view, the properties of these quadratic forms look curious: in particular, the authors do not know any obvious reason, why these forms tend to be locally equivalent, for all $p,q$.

\begin{rmk}\label{complexity}
The indefinite forms $A$ and $A_1$ can not distinguish strongly regular graphs in general, as one readily checks that their determinants are in general not big enough for the spinor genus to contain more than one class in its genus: see e.g. Corollary 22 on page 395 of \cite{CS}. On the other hand, computer experiments seem to suggest that, comparing the definite forms for strongly regular graphs as we did, is in general much faster than comparing the same forms coming from pairs of random graphs of the same size.
\end{rmk}

Next, we consider the quadratic form represented by the combinatorial Laplacian matrix $M$, which is again, an obvious graph invariant. Let $G_1,G_2$ be two graphs, and $X$ be a vertex of $G_1$, and $Y_1,...,Y_k$ be a set of vertices of $G_2$, we define a new graph $G$ to be the disjoint union of $G_1$ and $G_2$, together with an edge between each pair of vertices $X,Y_i$, $i=1,...,k$. We have the following

\begin{lem}\label{move}
The isomorphism class of the quadratic form represented by the combinatorial Laplacian of $G$, is independent of the choice of $X$.
\end{lem}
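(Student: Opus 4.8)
\emph{Proof proposal.} The plan is to exhibit an explicit unimodular change of variables relating the two Laplacian forms, working with the quadratic form directly rather than with the matrices. Write $V_1,V_2$ for the vertex sets of $G_1,G_2$, so that $G$ has vertex set $V_1\sqcup V_2$ and $n=|V_1|+|V_2|$. Throughout I use the elementary identity that the combinatorial Laplacian form of any graph $H$ satisfies $\phi^\top L_H\phi=\sum_{\{a,b\}\in E(H)}(\phi(a)-\phi(b))^2$ for all $\phi\colon V(H)\to\bR$ (the discrete Green's formula recalled in Section \ref{sec:PI}). Let $X'\in V_1$ be a second choice of attachment vertex and let $G'$ be the graph built from $X'$ and the same $Y_1,\dots,Y_k$; denote the two Laplacian forms by $Q_G$ and $Q_{G'}$.

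First I would separate the shared part from the part that depends on the attachment vertex. Since $G$ and $G'$ have identical internal edges (those lying inside $G_1$ or inside $G_2$) and differ only in the $k$ bridge edges, one has
\begin{equation*}
Q_G(\phi)=R(\phi)+\sum_{i=1}^k\big(\phi(X)-\phi(Y_i)\big)^2,\qquad
Q_{G'}(\phi)=R(\phi)+\sum_{i=1}^k\big(\phi(X')-\phi(Y_i)\big)^2,
\end{equation*}
where $R(\phi)$ is the common sum of $(\phi(a)-\phi(b))^2$ over the internal edges.

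The key step is to define the integer linear map $S$ that fixes every $V_1$-coordinate and adds the fixed quantity $\phi(X')-\phi(X)$ to every $V_2$-coordinate, i.e. $(S\phi)(v)=\phi(v)$ for $v\in V_1$ and $(S\phi)(w)=\phi(w)+\phi(X')-\phi(X)$ for $w\in V_2$. I would then check term by term that $Q_{G'}\circ S=Q_G$: the internal edges of $G_1$ are untouched because $V_1$-coordinates are fixed; the internal edges of $G_2$ are preserved because the common shift cancels in each difference $\phi(w)-\phi(w')$ with $w,w'\in V_2$; and for each bridge edge $(S\phi)(X')-(S\phi)(Y_i)=\phi(X')-\big(\phi(Y_i)+\phi(X')-\phi(X)\big)=\phi(X)-\phi(Y_i)$, so the $G'$ bridge terms become exactly the $G$ bridge terms. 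This yields $S^\top L_{G'}S=L_G$. It remains to see that $S\in\GL(n,\bZ)$: writing $S=I+\mathbf{1}_{V_2}(e_{X'}-e_X)^\top$ as a rank-one update, its determinant is $1+(e_{X'}-e_X)^\top\mathbf{1}_{V_2}=1$, because $X,X'\in V_1$ while $\mathbf{1}_{V_2}$ is supported on $V_2$, so the two supports are disjoint.

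Notably this argument needs no hypothesis that $X$ and $X'$ be adjacent, nor that $G_1$ be connected. I expect the only real subtlety to be the choice to translate \emph{all} of $V_2$ rather than merely the bridge vertices $Y_i$: shifting only the $Y_i$ would destroy the internal edges of $G_2$ that meet a $Y_i$, whereas the uniform shift leaves all within-$V_2$ differences invariant. The unimodularity check is then immediate from the disjoint-support computation above, completing the proof that $Q_G$ and $Q_{G'}$ are equivalent over $\bZ$.
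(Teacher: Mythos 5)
Your proof is correct, and at bottom it exploits the same unimodular transformation as the paper, but it runs the argument along a genuinely different route: a direct equivalence rather than reduction to a common normal form. The paper never compares the forms for $X$ and $X'$ directly; instead it performs row and column operations (adding the last $n_2$ rows to row $n_1$, then the last $n_2$ columns to column $n_1$), i.e.\ a congruence by the unimodular matrix $T=I+\mathbf{1}_{V_2}e_X^\top$, reducing the Laplacian of $G$ to the block-diagonal matrix $N=\diag\bigl(M_1,\,M_2+\sum_{i=1}^k E_{i,i}^{n_2,n_2}\bigr)$, which visibly involves no data about $X$; independence then follows because every choice of $X$ reduces to this same representative. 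Your map $S=I+\mathbf{1}_{V_2}(e_{X'}-e_X)^\top$ is exactly the composite $T'T^{-1}$ of the paper's two congruences (with $T'=I+\mathbf{1}_{V_2}e_{X'}^\top$), so the mechanism coincides; what differs is the packaging. Your version is self-contained and arguably cleaner: the edge-by-edge verification in quadratic-form language $\sum_{\{a,b\}\in E}(\phi(a)-\phi(b))^2$ makes the cancellation transparent, the unimodularity check is immediate from the disjoint-support rank-one computation, and it makes explicit that no adjacency or connectivity hypotheses are needed. What the paper's packaging buys is the normal form itself: the block-diagonal representative, with one block $M_1$ and the other block depending only on $G_2$ and the $Y_i$, is reused verbatim in the proof of Theorem \ref{monoid} to show that the wedge sum is well defined on equivalence classes, a step for which a bare equivalence $S^\top L_{G'}S=L_G$ between the two attachments is not by itself sufficient.
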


\begin{proof}
Let $n_1$ and $n_2$ denote the vertex numbers of $G_1$ and $G_2$, respectively. After a possible vertex permutation, the Laplacian matrix $M$ of $G$ is formed by two diagonal blocks $M_1+kE_{n_1,n_1}^{n_1,n_1}$ and $M_2+\sum_{i=1}^kE_{i,i}^{n_2,n_2}$, together with additional $-1$ at positions $(n_1,n_1+1),...(n_1,n_1+k)$ and $(n_1+1,n_1),...,(n_1+k,n_1)$, where $E_{i,j}^{s,t}$ denote the $s\times t$ matrix with entry $1$ at position $(i,j)$, and entry $0$ everywhere else. $M_1, M_2$ denote the Laplacian matrices of $G_1$ and $G_2$ respectively. We do the operation of adding the last $n_2$ rows to row $n_1$, and then adding the last $n_2$ columns to column $n_1$, the resulting matrix $N$ is formed by the block $M_1$, and the block $M_2+\sum_{i=1}^kE_{i,i}^{n_2,n_2}$. 

We have that $M\cong N$ as quadratic forms, and the isomorphism class of $N$ is obviously independent of the choice of $V_1$. Therefore, the isomorphism class of $M$ is independent of the choice of $V_1$. 
\end{proof}

For ease of terminology, let us call the quadratic form represented by the combinatorial Laplacian of a graph $G$, as the quadratic form of $G$. Next, we define the wedge sum graph $G_1\vee_{X,Y}G_2$ of $G_1$ and $G_2$ w.r.t. the vertex $X$ of $G_1$, and the vertex $Y$ of $G_2$, to be the graph formed by the quotient of the disjoint union of $G_1$ and $G_2$, by identifying $X$ and $Y$. We have the following

\begin{cor}\label{ind}
The isomorphism class of the quadratic form of $G_1\vee_{X,Y}G_2$, in independent of the choices of $X$ and $Y$.
\end{cor}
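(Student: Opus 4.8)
The plan is to deduce the corollary from Lemma \ref{move} by realizing the wedge sum as an instance of the construction $G$ appearing there. First I would record the combinatorial identity: if $Y_1,\ldots,Y_k$ are the neighbors of $Y$ in $G_2$ (so $k=\deg_{G_2}(Y)$), then $G_1\vee_{X,Y}G_2$ is isomorphic to the graph $G$ of Lemma \ref{move} formed from $G_1$ and the reduced graph $G_2':=G_2\setminus\{Y\}$, taking $X$ as the distinguished vertex of $G_1$ and the $Y_i$ (now vertices of $G_2'$) as the vertices joined to $X$. Indeed, identifying $X$ with $Y$ and dropping the now-redundant label has precisely the effect of deleting $Y$ from $G_2$ and rerouting each edge $YY_i$ to an edge $XY_i$.

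Granting this, independence of the choice of $X$ is immediate: the graph so produced is exactly the one to which Lemma \ref{move} applies (with $G_2$ there replaced by $G_2'$), and that lemma asserts that its Laplacian quadratic form does not depend on which vertex of $G_1$ is chosen as $X$. For independence of $Y$, I would invoke the evident symmetry $G_1\vee_{X,Y}G_2\cong G_2\vee_{Y,X}G_1$ and run the same argument with the roles of $(G_1,X)$ and $(G_2,Y)$ interchanged, realizing the wedge as the construction built from $G_2$ and $G_1\setminus\{X\}$ with distinguished vertex $Y$. Writing $F(X,Y)$ for the isomorphism class of the quadratic form of $G_1\vee_{X,Y}G_2$, the two applications give $F(X,Y)=F(X',Y)$ for all $X'$ and $F(X,Y)=F(X,Y')$ for all $Y'$; chaining these yields $F(X,Y)=F(X',Y')$ for arbitrary choices, which is the assertion.

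The only real content is the combinatorial identification of the first paragraph, and the step needing care is the bookkeeping of degrees. Deleting $Y$ lowers the degree of each neighbor $Y_i$ by one, and this is exactly compensated by the edge $XY_i$ added in the construction, so each $Y_i$ retains its original degree while the merged vertex acquires degree $\deg_{G_1}(X)+\deg_{G_2}(Y)$; verifying that these adjacencies and diagonal entries reproduce the Laplacian of the wedge sum is what makes the graph isomorphism hold. Once this is in place, the remainder is a direct appeal to Lemma \ref{move} together with the symmetry of $\vee$, and no further computation is required.
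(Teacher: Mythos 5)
Your proof is correct and follows essentially the same route as the paper's: you realize $G_1\vee_{X,Y}G_2$ as the construction of Lemma \ref{move} applied to $G_1$, the graph $G_2$ with $Y$ deleted, the vertex $X$, and the neighbors $Y_1,\ldots,Y_k$ of $Y$, then invoke that lemma for independence of $X$ and symmetry of the wedge for independence of $Y$. Your extra care with the degree bookkeeping and the explicit chaining $F(X,Y)=F(X',Y)=F(X',Y')$ merely spells out what the paper leaves implicit.
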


\begin{proof}
Let us denote the set of neighbors of $Y$ in $G_2$, by $\{Y_1,...,Y_k\}$. Observe that $G_1\vee_{X,Y}G_2$, is the same graph as that described before lemma \ref{move}, w.r.t. $G_1$, the subgraph of $G_2$ formed by deleting the vertex $Y$ and all edges connecting with it, $X$ and $Y_1,...,Y_k$. Therefore, by lemma \ref{move}, the isomorphism class of the quadratic form in question is independent of the choice of $X$. By symmetry, it is also independent of $Y$.
\end{proof}

\begin{dfn}
We impose an equivalence relation among graphs, by declaring two graphs to be equivalent, if and only if the quadratic forms represented by their Laplacian matrices are equivalent as quadratic forms, and call the set of such equivalence classes $G_f$. 
\end{dfn}

\begin{thm}\label{monoid}
The above wedge sum of graphs is well defined on the equivalence classes, and it gives rise to a structure of commutative monoid on the equivalence classes.
\end{thm}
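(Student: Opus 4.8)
The plan is to convert the wedge sum into an orthogonal direct sum of quadratic forms, so that the monoid axioms follow from the (obvious) corresponding properties of $\oplus$. First I would record the canonical degenerate decomposition of the Laplacian form. For a graph $G$ with $n$ vertices and $c$ connected components, the radical of $M_G$ over $\bZ$ is the sublattice spanned by the component-indicator vectors $\mathbf{1}_{C_1},\dots,\mathbf{1}_{C_c}$; since these have disjoint $0/1$ supports this sublattice is saturated, hence a direct $\bZ$-summand of $\bZ^n$, and the basis change sending one coordinate vector of each component to the corresponding $\mathbf{1}_{C_i}$ yields
\[
M_G \;\cong\; q(G)\oplus\langle 0\rangle^{\,c},
\]
where $q(G)$ is the nondegenerate form induced on $\bZ^n/\mathrm{rad}(M_G)$ and $\langle 0\rangle$ is the rank-one zero form (equivalently, any reduced Laplacian $\hat{M}_G^{\,v}$, obtained by deleting the row and column of $v$, satisfies $\hat{M}_G^{\,v}\cong q(G)\oplus\langle 0\rangle^{\,c-1}$). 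Because the radical and the induced quotient form are intrinsic, the class $[M_G]$ is equivalent data to the pair $(q(G),c)$; in particular the equivalence relation defining $G_f$ remembers exactly $q(G)$ and the component count $c$.

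The key computational step is the formula $M_{G_1\vee_{X,Y}G_2}\cong M_{G_1}\oplus \hat{M}_{G_2}^{\,Y}$. I would obtain this directly from Lemma \ref{move}: as already noted in the proof of Corollary \ref{ind}, $G_1\vee_{X,Y}G_2$ is the graph of Lemma \ref{move} built from $G_1$, the deletion $G_2\setminus Y$, and the edges joining $X$ to the neighbours $Y_1,\dots,Y_k$ of $Y$, and the second diagonal block produced there is precisely the reduced Laplacian $\hat{M}_{G_2}^{\,Y}$ (deleting $Y$ from $M_{G_2}$ leaves the diagonal entries at $Y_1,\dots,Y_k$ at their full $G_2$-degrees). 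Combining this with the decomposition above gives
\[
M_{G_1\vee G_2}\;\cong\; q(G_1)\oplus q(G_2)\oplus\langle 0\rangle^{\,c_1+c_2-1},
\]
so on the level of pairs the operation reads $(q_1,c_1)\ast(q_2,c_2)=(q_1\oplus q_2,\;c_1+c_2-1)$.

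Well-definedness is then immediate: the right-hand side depends only on $(q(G_i),c_i)$, which are determined by $[M_{G_i}]$, so replacing $G_1,G_2$ by equivalent graphs leaves $[M_{G_1\vee G_2}]$ unchanged (Corollary \ref{ind} is the special case recording independence of the chosen basepoints $X,Y$). Commutativity already holds at the level of graphs, since $G_1\vee_{X,Y}G_2\cong G_2\vee_{Y,X}G_1$, and also follows from commutativity of $\oplus$. Associativity reduces to associativity of $\oplus$ together with the elementary identity $(c_1+c_2-1)+c_3-1=c_1+(c_2+c_3-1)-1$. The identity element is the class of the one-vertex graph, for which $q=0$ and $c=1$: wedging any $G$ with a point returns $G$ itself, consistent with $(q,c)\ast(0,1)=(q,c)$.

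The main obstacle I anticipate is not a single hard computation but the careful handling of the degenerate part: one must verify that the radical of $M_G$ splits off as a direct $\bZ$-summand, so that $q(G)$ and $c$ are genuine isomorphism invariants of the \emph{form} (and not merely of the matrix), and check that the $c_1+c_2-1$ bookkeeping is compatible with both associativity and the identity. Once the canonical decomposition $M_G\cong q(G)\oplus\langle 0\rangle^{\,c}$ and the displayed wedge formula are in place, the monoid axioms are formal. If the intended setting is connected graphs only, every $c$ equals $1$, the operation collapses to $q_1\oplus q_2$ on reduced forms with identity the zero form, and the degeneracy bookkeeping disappears entirely.
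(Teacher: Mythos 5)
Your proposal is correct, and it runs on the same computational engine as the paper's proof: the row/column reduction of Lemma~\ref{move}, applied through the identification in Corollary~\ref{ind}, which exhibits the Laplacian form of $G_1\vee_{X,Y}G_2$ as a block sum of $M_{G_1}$ with an $(n_2-1)$-dimensional block determined by $G_2$ and $Y$. You go one step further than the paper in naming that block: it is indeed the reduced Laplacian $\hat{M}_{G_2}^{\,Y}$, which the paper leaves anonymous. Where you genuinely diverge is in the architecture above this step. The paper proves well-definedness by a direct matrix argument --- if $T\in GL(n_1,\bZ)$ carries $M_1$ to $M_1'$, then $\diag(T,I_{n_2-1})$ intertwines the two block forms --- and then gets associativity and commutativity essentially for free from Corollary~\ref{ind}: since the class is independent of basepoints, both bracketings of a triple wedge can be realized by the same graph, and $G_1\vee_{X,Y}G_2\cong G_2\vee_{Y,X}G_1$ at the level of graphs. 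You instead establish the normal form $M_G\cong q(G)\oplus\langle 0\rangle^{c}$ by splitting off the radical (your verification that the radical is saturated, hence a direct $\bZ$-summand, and that it is preserved by any isomorphism of forms, is exactly the point that makes $(q(G),c)$ an invariant of the class rather than of the matrix), and you reduce the whole monoid structure to the formula $(q_1,c_1)\ast(q_2,c_2)=(q_1\oplus q_2,\,c_1+c_2-1)$, after which the axioms are formal properties of $\oplus$. Your route is longer but yields strictly more: it identifies $G_f$ with a monoid of pairs (nondegenerate form, component count) under orthogonal sum, handles disconnected graphs completely explicitly (the paper is silent on the degenerate bookkeeping), and shows that for connected graphs the wedge class is simply the orthogonal sum of the reduced Laplacian forms --- a sharper statement than Theorem~\ref{monoid} itself, and one that makes the paper's remark about trees immediate. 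The paper's route is shorter and stays closer to the graph combinatorics, at the cost of recording less structure.
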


\begin{rmk}
As a result, any two trees of the same vertex number are in the same equivalence class, as they can be formed via step-by-step wedge sums of two-vertex trees. The reader is invited to check this by hand.
\end{rmk}

\begin{proof}
First, the equivalence class of $G_1\vee_{X,Y}G_2$ is independent of the choices of $X,Y$ by corollary \ref{ind}. Denote as before, $n_1=|G_1|$, and $n_2=|G_2|$.

Next, we view $G_1\vee_{X,Y}G_2$ as in the proof of corollary \ref{ind}. Then by doing the row and column operations as in the proof of lemma \ref{move}, the quadratic form of $G_1\vee_{X,Y}G_2$ can be represented by a block diagonal matrix, where one block is the combinatorial Laplacian $M_1$ of $G_1$, and the other block is of dimension $n_2-1$, and is determined by $G_2$ and $Y$. Next, suppose $G_1$ and $H_1$ are two graphs in the same equivalence class, and $X'$ be any vertex of $H_1$. The quadratic form of $H_1\vee_{X',Y} G_2$ can also be represented in the same way, where one block is the combinatorial Laplacian $M_1'$ of $H_1$, and the other block is determined by $G_2$ and $Y$, identical to that of $G_1\vee_{X,Y}G_2$. Since we have $M_1\cong M_1'$ as quadratic forms, there exists $T\in GL(n_1,\bZ), |\det(T)|=1$, such that $T^tM_1T=M_1'$, therefore the forms of $G_1\vee_{X,Y}G_2$ and $H_1\vee_{X',Y}G_2$ are equivalent, via the block diagonal matrix $\diag(T,I_{n_2-1})$. This proves our wedge sum of graphs is well defined on the equivalence classes.

Next, corollary \ref{ind} implies that on the equivalence classes, the sum is associative and commutative. Furthermore, the one-vertex graph $P$ affords the identity element, and $G_f$ becomes a commutative monoid.
\end{proof}

Let $G$ be a connected graph other than the single-vertex graph $P$, we call $G$ to be a simple graph, iff $G$ cannot be written as a sum $G_1\vee_{X,Y} G_2$ of two strictly smaller graphs. Obviously the simple graphs, together with $P$, generate the submonoid of $G_f$ consisting of connected graphs. We have the following

\begin{lem}\label{decomposition}
Any connected graph other than $P$ has a unique decomposition into a wedge sum of simple graphs, which we call its simple components.
\end{lem}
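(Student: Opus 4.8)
The plan is to identify the wedge-sum decomposition with the classical \emph{block decomposition} of a connected graph. The first step is to translate the notion of a simple graph into the language of cut vertices. If $G = G_1 \vee_{X,Y} G_2$ with both factors strictly smaller, then since $|G_1 \vee_{X,Y} G_2| = |G_1| + |G_2| - 1$ each factor has at least two vertices, and the identified vertex $w$ is a cut vertex of $G$: deleting it separates the nonempty set of vertices of $G_1$ other than $X$ from the nonempty set of vertices of $G_2$ other than $Y$, and by the definition of the wedge sum there are no edges between these two sets. Conversely, if a connected graph $G \neq P$ has a cut vertex $v$, let $C_1, \ldots, C_r$ (with $r \geq 2$) be the connected components of $G - v$; setting $G_1$ to be the subgraph induced on $C_1 \cup \{v\}$ and $G_2$ the subgraph induced on $(C_2 \cup \cdots \cup C_r) \cup \{v\}$, each is connected (as $G$ is connected, every $C_i$ is joined to $v$) and strictly smaller, and one checks directly that $G = G_1 \vee_{v,v} G_2$. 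Hence a connected graph $G \neq P$ is simple if and only if it has no cut vertex; equivalently, each simple graph is either $K_2$ or is $2$-connected.

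Existence then follows by induction on $|G|$: if $G$ is simple there is nothing to prove, and otherwise the splitting at a cut vertex above expresses $G$ as a wedge sum of two strictly smaller connected graphs, each of which decomposes into simple components by the inductive hypothesis. Associativity and commutativity of the wedge sum (Theorem \ref{monoid}) let us regard the outcome as an unordered collection of simple subgraphs of $G$.

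For uniqueness I would appeal to the intrinsic \emph{block} structure of $G$. Recall that the relation on the edges of $G$ defined by declaring $e \sim e'$ whenever $e = e'$ or $e$ and $e'$ lie on a common cycle is an equivalence relation, whose classes span the \emph{blocks} of $G$, namely the maximal connected subgraphs having no cut vertex; this partition is manifestly independent of all choices. The heart of the argument is to show that the simple components produced by \emph{any} wedge decomposition are exactly the blocks of $G$. Each simple component $S$ in a decomposition is a subgraph of $G$ with no cut vertex, so it meets the remainder of $G$ only at the identification vertices; consequently no edge of $G$ outside $S$ can lie on a common cycle with an edge of $S$, forcing $S$ to be a union of $\sim$-classes, and its having no cut vertex forces it to be a single class, i.e. a block. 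Conversely every block is attained. Since the blocks are intrinsically determined, so is the unordered multiset of simple components.

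The main obstacle is precisely this matching step: one must verify that the recursively defined wedge decomposition cannot partition the edges of $G$ into simple pieces in any way other than the block partition — in particular that a $2$-connected piece can never be split across two components and that two blocks meeting at a cut vertex can never be merged into a single simple component. This is where $2$-connectivity (absence of a cut vertex) must be used decisively, through the common-cycle characterization of blocks, to pin down the identification ``simple $=$ block'' and thereby force uniqueness.
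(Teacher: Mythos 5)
Your proof is correct, but it reaches uniqueness by a genuinely different route than the paper. The paper argues by induction on the graph: it picks a gluing (cut) vertex $V$, observes that the simple factors of \emph{any} wedge decomposition must lie inside the components of $G$ with respect to $V$ (a factor straddling $V$ would have $V$ as a cut vertex, contradicting simplicity), notes that these components are determined by $V$ and are preserved under isomorphism, and then applies the inductive hypothesis to each component. You instead prove uniqueness in one stroke by identifying the simple factors of any decomposition with the classical blocks of $G$: you first characterize simple graphs as exactly $K_2$ and the $2$-connected graphs, then use the fact that every cycle of an iterated wedge sum lies inside a single factor (since gluing vertices are cut vertices) to show each factor is a union of classes of the common-cycle relation, and finally use the fact that any two edges of a $2$-connected graph lie on a common cycle to show each factor is a single class, i.e.\ a block. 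Your approach buys more: it gives an intrinsic structural description (simple components $=$ blocks, so the decomposition is the block decomposition and the gluing pattern is the block--cut tree), whereas the paper's induction only establishes uniqueness. The price is that you lean on two classical but nontrivial facts --- that the common-cycle relation is an equivalence relation whose classes span the blocks, and the common-cycle property of $2$-connected graphs --- which you should cite (e.g.\ any standard graph theory text) or prove, since the paper's argument is self-contained and uses neither. Also, your closing paragraph presents the ``simple $=$ block'' matching as an obstacle still to be overcome, but the argument in your third paragraph already disposes of it; you could simply delete that paragraph.
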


\begin{proof}
First, an easy induction shows such a decomposition always exists.

Next, to prove uniqueness, we consider the set of gluing vertices in the graph $G$: namely, the set of vertices where the graph can be separated as a wedge sum of two components, glued along the vertex. If there is no such vertex, then by definition the graph is simple, and there is nothing to prove. Otherwise, pick any such vertex $V$, then $G$ is separated into components, glued along $V$, which we call components of $G$, w.r.t. $V$. Obviously, any connected subgraph of the connected graph $G$ containing $V$ cannot be a simple graph, so the simple graphs in the decomposition of $G$, which are obviously connected, all appear as subgraphs of components of $G$, w.r.t. $V$. Suppose there is another graph $G'$ isomorphic to $G$ as a graph, which is also written as a wedge sum of simple graphs. Then under this graph isomorphism, $V$ must be mapped to a gluing vertex $V'$ of $G'$. Next, the components of $G$ w.r.t. $V$ must map bijectively to isomorphic components of $G'$ w.r.t. $V'$.  Again, any simple graph in the decomposition of $G'$, must appear as a subgraph of components of $G'$ w.r.t. $V'$. By induction, each such component has a unique decomposition into simple graphs, thus the decomposition of $G'$ must be the same as that of $G$.
\end{proof}

\begin{rmk}
Therefore, to understand the quadratic form of a connected graph, it suffices to understand the forms of its simple components, and we do not need to worry about how the simple components glue. 
\end{rmk}

To go further, from computer experiments that will be described below, we propose the following

\begin{conj}\label{planar_dual}
Let $G_1$ and $G_2$ be two connected planar graphs, and suppose there exist embeddings of the two graphs into the 2-sphere, such that the dual graphs w.r.t. the embeddings are isomorphic as multi-graphs, then the quadratic forms over $\bZ$, represented by the combinatorial Laplacian of $G_1$ and $G_2$ are equivalent.
\end{conj}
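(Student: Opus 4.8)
The plan is to identify the nondegenerate part of the Laplacian quadratic form with a concrete lattice attached to the dual graph, and then to read off the invariance from planar duality. Throughout, fix orientations of the edges, let $B$ be the signed vertex–edge incidence matrix of a connected graph $G$, so that the combinatorial Laplacian is $M=BB^{t}$, and recall that the radical of the form $x\mapsto x^{t}Mx$ is $\ker_{\bZ}M=\bZ\mathbf 1$, which has rank one (as $G$ is connected) and is saturated because it is the kernel of an integer matrix. Splitting $\bZ^{V}=\bZ\mathbf 1\oplus W$ and restricting, in the spirit of the row and column operations in the proof of Lemma \ref{move}, gives $M\cong\langle 0\rangle\oplus q_{G}$, where $\langle 0\rangle$ is the rank-one zero form and $q_{G}$ is the form $w\mapsto w^{t}Mw=\lVert B^{t}w\rVert^{2}$ on $W$. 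Since $B^{t}$ has kernel $\bZ\mathbf 1$ and image the cut lattice $\mathcal B(G)=\operatorname{im}(B^{t})\subseteq\bZ^{E}$, the map $B^{t}\colon W\to\mathcal B(G)$ is an isometry onto $\mathcal B(G)$ with the standard Euclidean form. Thus $q_{G}\cong\mathcal B(G)$, and the whole problem reduces to showing $\mathcal B(G_{1})\cong\mathcal B(G_{2})$ as integral lattices.

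Next I would invoke planar duality in its integral form: for a connected plane graph $G$ with dual $G^{*}$, under the natural edge bijection $e\leftrightarrow e^{*}$ (with $e^{*}$ oriented by a fixed rotation of $e$), the cut lattice of $G$ coincides with the cycle lattice of the dual, $\mathcal B(G)=\mathcal Z(G^{*})$, where $\mathcal Z(G^{*})=\ker(B_{G^{*}})\subseteq\bZ^{E}$. Over $\bQ$ this is the classical statement that planar duality interchanges the cut space and the cycle space; to upgrade it to an equality of lattices it suffices to note that both sides are saturated in $\bZ^{E}$ — the cycle lattice as the kernel of an integer matrix, and the cut lattice because the incidence matrix is totally unimodular, so that $\operatorname{im}(B^{t})$ is a direct summand. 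Two saturated lattices with the same rational span are equal, which yields the integral identity.

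Finally, a multigraph isomorphism $\psi\colon G_{1}^{*}\to G_{2}^{*}$ induces a bijection of edge sets and, after choosing signs $\varepsilon_{e}=\pm1$ to reconcile orientations, a signed permutation $\Pi\colon\bZ^{E(G_{1}^{*})}\to\bZ^{E(G_{2}^{*})}$. Such a $\Pi$ is an isometry of the standard form and intertwines the two incidence matrices, hence carries $\mathcal Z(G_{1}^{*})$ onto $\mathcal Z(G_{2}^{*})$; therefore $\mathcal B(G_{1})=\mathcal Z(G_{1}^{*})\cong\mathcal Z(G_{2}^{*})=\mathcal B(G_{2})$. Since these nondegenerate forms are isometric they have equal rank, forcing $|V(G_{1})|-1=|V(G_{2})|-1$, so the zero parts and total dimensions match as well; conjugating by $\diag(1,S)$ for an isometry $S$ of the cut lattices then yields $M_{1}\cong\langle 0\rangle\oplus q_{G_{1}}\cong\langle 0\rangle\oplus q_{G_{2}}\cong M_{2}$ over $\bZ$.

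The main obstacle is the integral (as opposed to rational) planar duality $\mathcal B(G)=\mathcal Z(G^{*})$: one must set up the edge correspondence and induced orientations carefully and then verify saturation on both sides — the cut-lattice saturation via total unimodularity of the incidence matrix being exactly the point that makes the lattice identification an equality rather than merely an equality up to finite index. Everything else is bookkeeping with signed permutations and the canonical splitting of the radical.
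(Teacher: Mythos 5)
Your proposal addresses a statement the paper itself never proves: Conjecture \ref{planar_dual} is put forward purely on the strength of computer experiments and worked examples (the gluing constructions of Figures \ref{fig:g=0} and \ref{fig:g=1}), and the only theoretical remark the paper makes in its vicinity is that Corollary \ref{ind} would essentially follow from it. So there is no proof in the paper to compare against; what you have written is an actual proof, and after checking each step I believe it is correct. The three pillars all hold: (i) since $M=BB^{t}$ and $\ker B^{t}=\bZ\mathbf{1}$ is saturated, the Laplacian form splits integrally as $\langle 0\rangle\oplus q_{G}$, and $B^{t}$ restricted to a complement of $\bZ\mathbf{1}$ is an isometry of $q_{G}$ onto the cut lattice $\operatorname{im}(B^{t})\subseteq\bZ^{E}$ with the standard form; your appeal to total unimodularity (via Smith normal form, all nonzero invariant factors equal $1$) is the right tool to get saturation of that image. (ii) The integral planar duality $\operatorname{im}(B_{G}^{t})=\ker(B_{G^{*}})$ under the rotation-oriented edge bijection is correct; besides your "rational duality plus saturation on both sides" route, one can see it even more directly from $H_{1}(S^{2};\bZ)=0$ for the cellular chain complex of the embedding, which identifies $\ker\partial_{1}$ with $\operatorname{im}\partial_{2}$ and identifies $\partial_{2}$ with the dual incidence map; either way the statement is genuinely integral, not just up to finite index. (iii) An abstract multigraph isomorphism of the duals induces a signed permutation of $\bZ^{E}$ carrying one cycle lattice onto the other; note that loops of the dual (arising from bridges of the primal) cause no trouble, since their incidence columns vanish and correspondingly the primal cut lattice contains the relevant standard basis vectors. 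The final rank bookkeeping (isometric cut lattices force $|V_{1}|=|V_{2}|$; alternatively Euler's formula) is also fine, so $\diag(1,S)$ finishes the argument. Two remarks: the identification of the nondegenerate part of the Laplacian form with the cut lattice, and the isometry between the cut lattice of a plane graph and the cycle (flow) lattice of its dual, are known results --- essentially the theorem of Bacher, de la Harpe and Nagnibeda on lattices of integral cuts and flows --- so your argument simultaneously settles the conjecture and ties it to existing theory; and the one step where care is truly needed, the signed integral form of planar duality with a coherent orientation convention for dual edges, is exactly the step your write-up isolates rather than glosses over with the $\bF_{2}$ or matroid-theoretic version, which would be insufficient here.
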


Furthermore, one could try to remove the planar graph restriction: in general, any graph can be embedding into a higher genus surface, and the minimal genus of such surfaces is called the genus of the graph. Given any such embedding, the dual (multi-)graph can obviously be defined in the same way, as that in the planar case. The above conjecture could be extended to the following

\begin{conj}\label{dual}
Let $G_1$ and $G_2$ be two connected graphs of the same genus $g$, and further suppose that $G_1$ and $G_2$ have embeddings into the genus $g$ surface $\Sigma_g$, such that the corresponding dual graphs are isomorphic as multi-graphs, then the quadratic forms over $\bZ$, represented by the combinatorial Laplacian of $G_1$ and $G_2$ are equivalent..
\end{conj}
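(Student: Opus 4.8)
The plan is to realize the Laplacian quadratic form of a cellularly embedded graph as a \emph{cut lattice} inside the standard unimodular lattice $\bZ^E$, and to exploit the orthogonality between the cut lattice of $G$ and that of its dual. Fix a cellular embedding of $G$ in $\Sigma_g$ (minimal-genus embeddings are cellular) and form the cellular chain complex $C_2 \xrightarrow{\partial_2} C_1 \xrightarrow{\partial_1} C_0$ with $C_0=\bZ^V$, $C_1=\bZ^E$, $C_2=\bZ^F$, so that $\partial_1$ is the signed vertex--edge incidence matrix and $M=\partial_1\partial_1^\top$ is the combinatorial Laplacian. For connected $G$ the radical of $M$ is exactly $\bZ\mathbf 1$, a primitive direct summand of $\bZ^V$, so the form of $G$ splits as a rank-$1$ zero form plus its nondegenerate part; moreover $\langle \partial_1^\top v_i,\partial_1^\top v_j\rangle=\langle v_i, M v_j\rangle$ shows that this nondegenerate part is isometric to the cut lattice $\Lambda_G:=\operatorname{im}(\partial_1^\top)\subseteq\bZ^E$ with the standard Euclidean form. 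Since the shared dual fixes $F$ and $E$, Euler's formula $V-E+F=2-2g$ forces $V_1=V_2$, so it suffices to prove that the cut lattices $\Lambda_{G_1},\Lambda_{G_2}$ are isometric.

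The two structural facts I would establish next are orthogonality and saturation. Because $\partial_1\partial_2=0$, we have $\langle \partial_1^\top a,\partial_2 b\rangle=a^\top\partial_1\partial_2 b=0$, so $\Lambda_G$ is orthogonal to the dual cut lattice $\Lambda_{G^*}:=\operatorname{im}(\partial_2)$ inside $\bZ^E$ (here $\partial_2^\top$ is, up to orientation conventions, the incidence matrix of the dual multigraph $G^*$). Since incidence matrices of graphs are totally unimodular, both $\Lambda_G$ and $\Lambda_{G^*}$ are \emph{saturated} sublattices of $\bZ^E$, of ranks $V-1$ and $F-1$; by Euler's formula these sum to $E-2g$, so $(\Lambda_G\oplus\Lambda_{G^*})^\perp$ has rank $2g$ and carries the homology of $\Sigma_g$. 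Finally, an isomorphism $G_1^*\cong G_2^*$ of dual multigraphs is a bijection of the common edge set $E$, hence a signed-permutation isometry of $\bZ^E$ carrying $\Lambda_{G_1^*}$ to $\Lambda_{G_2^*}$; after applying it we may assume $\Lambda_{G_1^*}=\Lambda_{G_2^*}=:\Lambda^*$ as embedded sublattices, and then $\Lambda_{G_1},\Lambda_{G_2}\subseteq N:=(\Lambda^*)^\perp$, a single fixed lattice of rank $V-1+2g$.

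In the planar case $g=0$ this already closes the argument: $N$ has rank $V-1=\operatorname{rank}\Lambda_{G_i}$, and since each $\Lambda_{G_i}$ is saturated in $\bZ^E$ it is saturated of full rank in $N$, whence $\Lambda_{G_1}=N=\Lambda_{G_2}$; this proves Conjecture~\ref{planar_dual} unconditionally. For general $g$, each $\Lambda_{G_i}$ is a saturated corank-$2g$ sublattice of the \emph{same} fixed lattice $N$, and its orthogonal complement $H_i:=\Lambda_{G_i}^{\perp}\cap N$ is a rank-$2g$ positive-definite lattice whose rational span is $H_1(\Sigma_g;\bQ)$. Thus the conjecture reduces to the purely lattice-theoretic statement that two saturated corank-$2g$ sublattices of $N$, arising as embedded homology complements, are isometric.

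The hard part is precisely this genus contribution. A natural route is Nikulin's theory of discriminant forms and gluing: the pair $(\Lambda_{G_i},H_i)$ glues inside $N$, so the discriminant form of $\Lambda_{G_i}$ is determined, up to sign, by those of $N$ and $H_i$, which would pin down the \emph{genus} of $\Lambda_{G_i}$ once one shows the $H_i$ all lie in a single genus (this is where the standard intersection pairing on $H_1(\Sigma_g)$ should enter). The remaining, and I expect most serious, obstacle is upgrading equality of genera to an honest isometry over $\bZ$: positive-definite lattices of the same genus need not be isometric, so one must bring in the extra rigidity of the embedded-homology structure of the $H_i$ (or a class-number-one phenomenon special to these lattices) to exclude distinct classes. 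Controlling the $H_i$ tightly enough to force $\Lambda_{G_1}\cong\Lambda_{G_2}$, rather than merely co-generic, is the crux on which the conjecture turns, and it also explains why the planar case, where $H_i$ vanishes, is automatic.
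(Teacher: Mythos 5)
The paper does not actually prove Conjecture \ref{dual}: it is stated as a conjecture and supported only by computer experiments (pairs built by gluing cycles to $K_n$ and to other graphs), so there is no proof of the paper to compare yours against, and your attempt must be judged on its own merits. Your lattice-theoretic setup is correct: for a connected graph the radical of the Laplacian form is the primitive vector $\bZ\mathbf{1}$, so the form splits as $\langle 0\rangle$ plus the cut lattice $\Lambda_G=\operatorname{im}(\partial_1^\top)\subseteq\bZ^E$; total unimodularity of incidence matrices does give saturation of $\Lambda_G$ and $\Lambda_{G^*}$; $\partial_1\partial_2=0$ gives the orthogonality; Euler's formula gives the rank counts and $V_1=V_2$; and an isomorphism of dual multigraphs does induce a signed-permutation isometry of $\bZ^E$ identifying $\Lambda_{G_1^*}$ with $\Lambda_{G_2^*}$. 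You also correctly handle cellularity (automatic in $S^2$ for connected graphs; for \ref{dual} the embeddings are minimal-genus, hence cellular by Youngs' theorem). In particular your planar argument is complete: for $g=0$ each $\Lambda_{G_i}$ is a saturated sublattice of full rank in $N=(\Lambda^*)^\perp$, hence equals $N$, so the two cut lattices literally coincide. That proves Conjecture \ref{planar_dual}, which is strictly more than the paper establishes for that case.

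For the statement actually at issue, however — genus $g\geq 1$ — your proposal has a genuine gap, which to your credit you name yourself. After the reduction, $\Lambda_{G_1}$ and $\Lambda_{G_2}$ are two saturated corank-$2g$ sublattices of the same lattice $N$, and nothing in the argument relates them to each other: what is needed is an isometry of $N$ carrying $H_1=\Lambda_{G_1}^{\perp}\cap N$ onto $H_2=\Lambda_{G_2}^{\perp}\cap N$, and you establish neither that the $H_i$ lie in one genus, nor that they are abstractly isometric, nor — the actual requirement — that some isometry of the ambient $N$ moves one to the other. The tools you point to cannot close this as stated: Nikulin-style gluing controls discriminant forms, i.e.\ the genus, but positive definite lattices in a single genus need not be isometric, and even an abstract isometry $H_1\cong H_2$ need not extend to $N$. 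So your write-up proves the planar conjecture and gives a clean, faithful lattice-theoretic reformulation of the general one, but it does not prove Conjecture \ref{dual}; the conjecture remains open at exactly the step you call the crux.
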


\begin{rmk}
Note that, if one drops the condition that $G_1$ and $G_2$ having the same genus, the conjecture would become false: e.g. take the 3-path and the 3-cycle, both can be embedded into a torus, such that the dual graph consists of a single vertex, and 3 self-edges. However the quadratic forms of the 3-path and the 3-cycle are not equivalent.
\end{rmk}

\begin{rmk}
In addition, from computer experiments, one could wonder to what extent, the inverse statement could be true. We have some computational evidence supporting that the inverse statement (or perhaps some variation of that) might be true.
\end{rmk}

\begin{rmk}
The above conjectures link the geometry and combinatorics of graphs, with the arithmetic of quadratic forms. In particular, note that for connected graphs, Corollary \ref{ind} is essentially a consequence of the conjectures. 
\end{rmk}

Now we exhibit a few examples whose dual graphs with respect to certain embeddings are isomorphic, for which the conjectures are tested to be true.
Figure \ref{fig:g=0} shows a pair of planar graphs (in color blue) constructed by "gluing" two $3$-cycles with a $4$-cycle, embedded into a sphere.
Figure \ref{fig:g=1} shows embeddings into a torus of a pair of genus $1$ graphs constructed by gluing two $4$-cycles with a complete graph on $5$ vertices. 

\begin{figure}[h!]
\includegraphics[height=4cm]{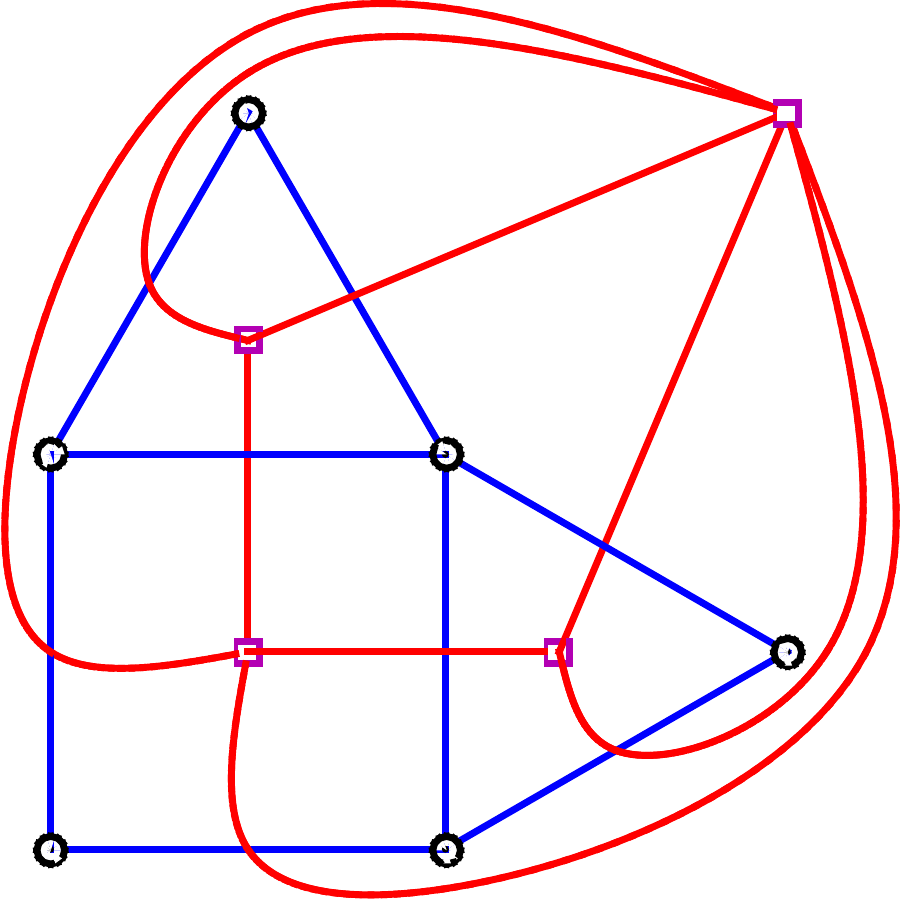}
\includegraphics[height=4cm]{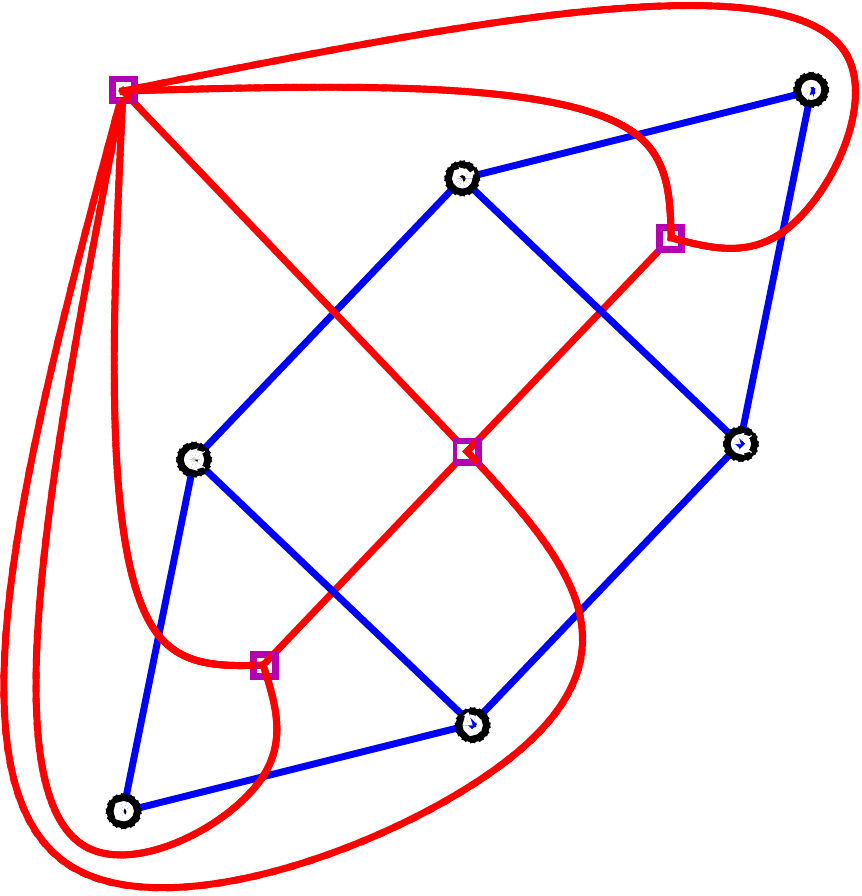}
\caption{A pair of planar graphs embedded into the sphere, with isomorphic dual graphs}
\label{fig:g=0}
\end{figure}

\begin{figure}[h!]
\includegraphics[height=4cm]{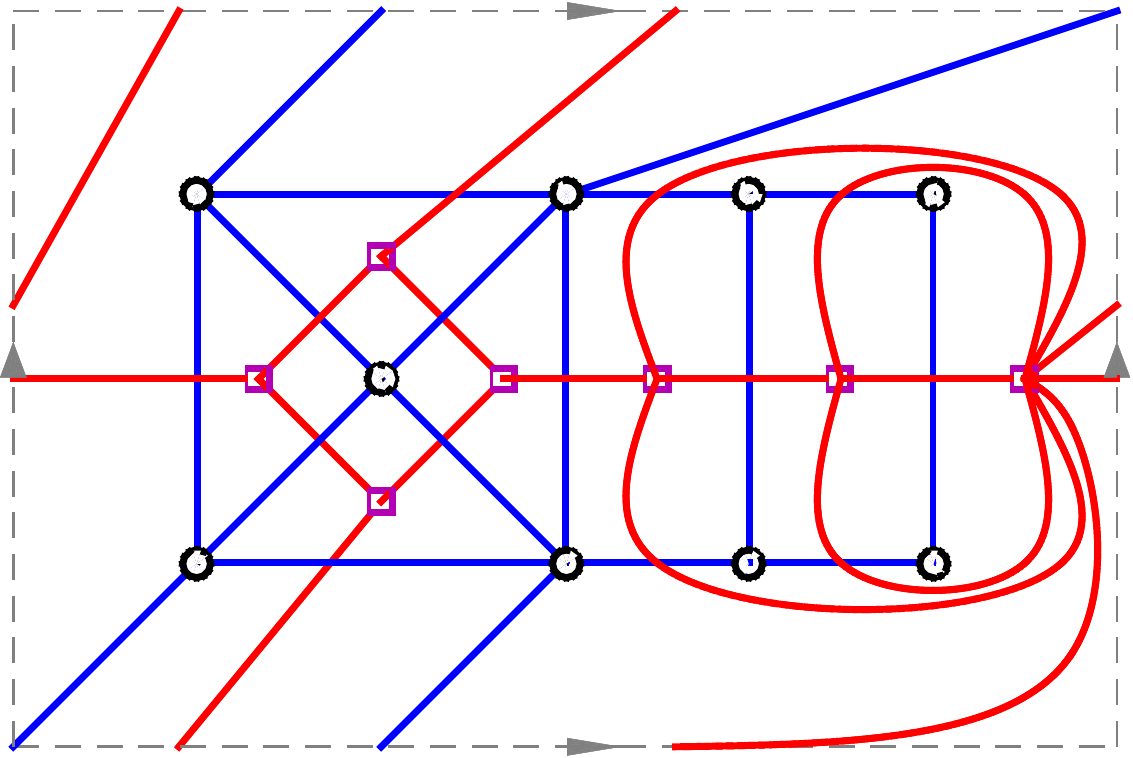}
\includegraphics[height=4cm]{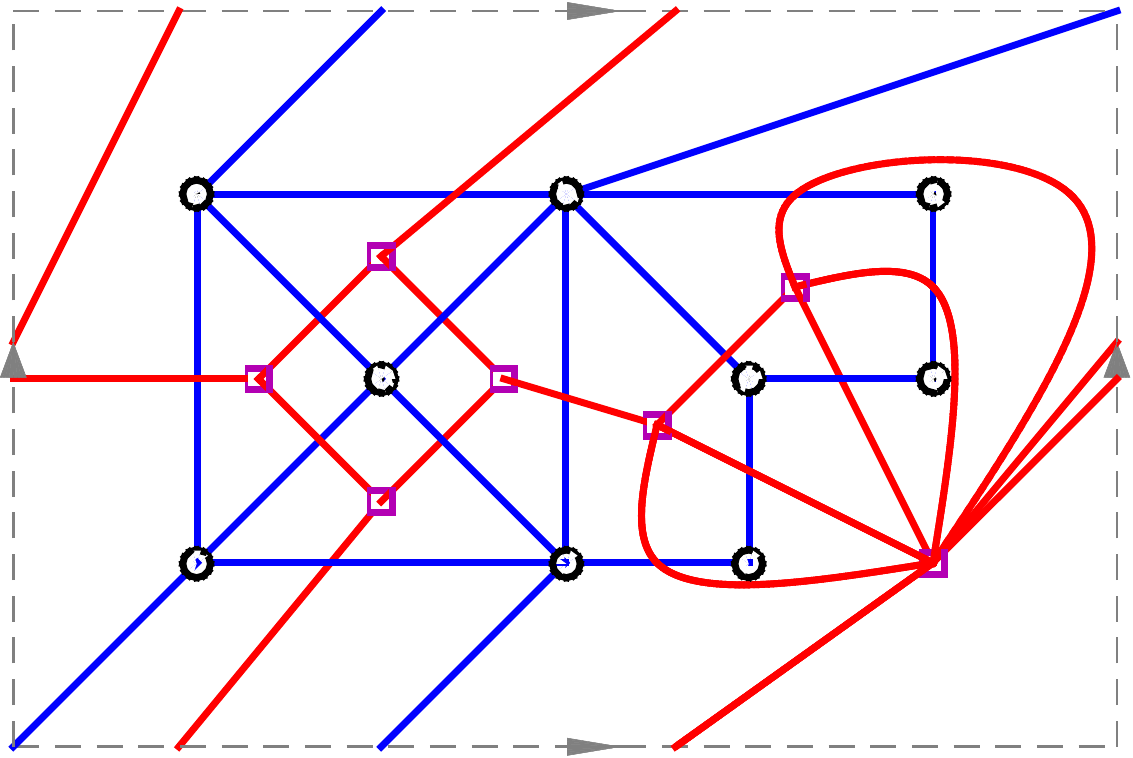}
\caption{A pair of genus one graphs embedded into the torus, with isomorphic dual graphs}
\label{fig:g=1}
\end{figure}

In Figure \ref{fig:g=1}, note that the pair of graphs with isomorphic dual graphs is constructed by gluing two 4-cycles to $K_5$ in a way, such that those cycles lie in the same region of the torus, given by connected components of the complement of the embedded image of $K_5$. In the same way, we can construct examples of graph pairs of higher genus, with embeddings into surface of the corresponding genus, for which the dual graphs are isomorphic, by gluing several cycles to a complete graph $K_n$, or to other types of graphs, in this specific way.

As such, we have tested some higher genus graph pairs, constructed by gluing cycles to a $K_n$, $n=8, \ldots, 40$, and to some other graphs including random graphs. For each pair tested, the quadratic forms represented by the combinatorial Laplacian are globally equivalent. 

\begin{rmk}\label{pL}
The computer experimental results on $p$-adic symbols of the quadratic form for the combinatorial Laplacian of strongly regular graphs are written in Table \ref{tab:p-adic_srg_lap} and Table \ref{tab:p-adic_srg_lap_2}. The classification of ${\rm srg}(25,12,5,6)$ according to the $p$-adic symbols of the combinatorial Laplacian, distinguishes the only graph of 25 vertices with a transitive group, which is known as the Paley graph of order 25 \cite{Paulus}. Again, this classification distinguishes the "Chang graphs" among ${\rm srg}(28,12,6,4)$.
\end{rmk}

\section{A graph distance function}\label{distance}

It is seldom that two big graphs are isomorphic in real applications. Based on the function $\mathcal{T}^G_u$ defined in \eqref{2cor_fun}, we define a function $\mathcal{D}$ to measure the distance between two connected\footnote{A slight variation can accommodate disconnected graphs.} graphs of the same vertex number $|V|$ by 
\begin{equation}\label{eq:distfun}
\mathcal{D}\left({G}, G_1\right) = |V|^2 \sqrt{\frac{\sum\limits_{j=1}^{|V|} \sum\limits_{(x,y)}\left(\operatorname*{\rm sort}\limits_{(x,y)\in V^2}\left( \widetilde{\mathcal{T}}_{u_j}^{G}(x,y)\right) - \operatorname*{\rm sort}\limits_{(x,y)\in V^2}\left( \widetilde{\mathcal{T}}_{u_j}^{G_1}(x,y) \right) \right)^2}{|V|}},
\end{equation}
where\footnote{$\widetilde{\mathcal{T}}_u^G$ differs from $\mathcal{T}_u^G$ only by dropping the term corresponding to the zero eigenvalue, for the purpose of stability of the algorithm.}
\begin{equation}
\widetilde{\mathcal{T}}_u^G(x,y) \equiv \sum_{k=1}^m \frac{t_k(x,y)}{\lambda_k+u}, 
\end{equation}
and $c$ is a small constant that we choose it to be $c=10^{-4}$ for the moment, for initial testing purposes. Suppose $\lambda_1$ of $G$ is less than or equal to the first nonzero eigenvalue of that of the graph $G_1$, then we adopt
$$u_j = \frac{c j}{|V|} \lambda_1,$$
for $j=1, \ldots, |V|$. 

\begin{rmk}
It is clear that the function $\mathcal{D}$ satisfies the triangle inequality 
\[
\mathcal{D}(G, G_1) \leq \mathcal{D}(G, G_2) + \mathcal{D}(G_2, G_1),
\]
when $G$, $G_1$ and $G_2$ are cospectral. In general, when two graphs are not cospectral, some slight modifications of $\mathcal{D}$, e.g. choosing $u_j$ independently of the eigenvalues, can still give distance functions in a strict sense.
\end{rmk}

The basic idea for the function $\mathcal{D}$ is to give a measure of the distance between $\widetilde{\mathcal{T}}_u^G$ and $\widetilde{\mathcal{T}}_u^{G_1}$, as two functions of $u$. Once the functions $\widetilde{\mathcal{T}}_u^G$ and $\widetilde{\mathcal{T}}_u^{G_1}$ are identical, our graph invariant ST for the two graphs are identical, and we regard these two graphs as having zero distance. To compare these two functions, we uniformly sample the points $u_j$ whose values are very small relative to the nonzero eigenvalues, and compare the values of $\widetilde{\mathcal{T}}_{u_j}^G$ and $\widetilde{\mathcal{T}}_{u_j}^{G_1}$, $j=1, \ldots, |V|$. If the functions $\widetilde{\mathcal{T}}_u^G$ and $\widetilde{\mathcal{T}}_u^{G_1}$ are identical at the $|V|$ many points $u_j$, $j=1\ldots, |V|$, and if the two graphs are cospectral, then one sees right away that the two functions are equal. 
Here the ordering of $(x,y)$ is according to the value of $\widetilde{\mathcal{T}}_{u_1}^{G}(x,y)$, and $|V|^2$ is a normalization factor. 

\begin{rmk}
In general, there are other possible choices of sorting rules, such as sorting according to the value of $t_k(x,y)$ without involving any eigenvalues. We may also refine our sorting procedure by first comparing the values of $\widetilde{\mathcal{T}}_{u_1}^{G}(x,y)$, and if some values get too close to each other, we may then compare the corresponding values of $\widetilde{\mathcal{T}}_{u_2}^{G}(x,y)$, and so on. Each different sorting rule may have its advantages in different applications.
\end{rmk}

\begin{rmk}
For consideration in applications, this distance function can be easily extended to apply to pairs of graphs with approximately the same number of vertices, by adding a few isolated vertices.
\end{rmk}

\bibliographystyle{plain}
\bibliography{ref}

\clearpage

\appendix

\section{Initial numerical experiments for the distance function}\label{Experiments}
In the following, we demonstrate several numerical experiments to learn something about the graph distance $\mathcal{D}$ defined in \eqref{eq:distfun}.

\subsection{Dumbbell Graphs}
We first consider the dumbbell graphs, and its one-edge perturbations. The dumbbell graph $DB_n$, shown in Figure \ref{fig:dumbbell}, is constructed by connecting two copies of the complete graph $K_n$ with two edges. Note that there are four types of edges in a dumbbell graph, namely, $e_1$, $e_2$, $e_3$ and $e_4$ in Figure \ref{fig:dumbbell}. Thus, for $i=1,2,3,4$, we compute the distance $\mathcal{D}(DB_n, DB_n-e_i)$, where $DB_n-e_i$ denotes the graph constructed by removing an edge $e_i$ of a dumbbell graph $DB_n$, respectively. 

\begin{figure}[h!]
\centering
\begin{overpic}[height=3cm]{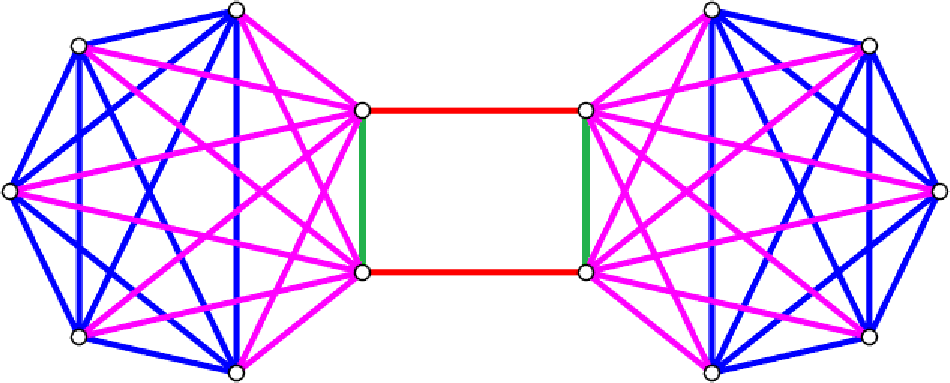}
\put(48,31){$e_1$}
\put(39,19){$e_2$}
\put(33,34){$e_3$}
\put(0,29){$e_4$}
\end{overpic}
\caption{The dumbbell graph}
\label{fig:dumbbell}
\end{figure}

According to the results, shown in Table \ref{tab:dumbbell}, we can easily observe that the graph distance by one-edge perturbations could be huge. For each $n$, as we would expect, the distance $\mathcal{D}(DB_n, DB_n-e_1)$ is significantly larger than others. This agrees with our intuition that the edge $e_1$ is critical in a dumbbell graph. 

\begin{table}[h!]
\centering
\begin{tabular}{c||c|c|c|c}
\hline
\rowcolor{blue!10}
$n$ & {\small $\mathcal{D}(DB_n, DB_n-e_1)$} & {\small $\mathcal{D}(DB_n, DB_n-e_2)$} & {\small $\mathcal{D}(DB_n, DB_n-e_3)$} & {\small$\mathcal{D}(DB_n, DB_n-e_4)$}\\
\hline\hline
5 & $1.7998 \times 10^2$ & $9.1929 \times 10^0$ & $1.5242 \times 10^1$ & $1.2019 \times 10^1$ \\ 
\hline
10 & $1.2061 \times 10^3$ & $8.2997 \times 10^0$ & $1.5786 \times 10^1$ & $1.0000 \times 10^1$ \\ 
\hline
20 & $8.8062 \times 10^3$ & $8.0771 \times 10^0$ & $1.9782 \times 10^1$ & $8.8893 \times 10^0$ \\
\hline
50 & $1.2999 \times 10^5$ & $8.0129 \times 10^0$ & $3.4289 \times 10^1$ & $8.3337 \times 10^0$ \\
\hline
100 & $1.0199 \times 10^6$ & $8.0031 \times 10^0$ & $5.9193 \times 10^1$ & $8.1633 \times 10^0$ \\
\hline
\end{tabular}
\caption{The distance by one-edge perturbations on dumbbell graphs}
\label{tab:dumbbell}
\end{table}

In addition, as $n$ increases, the distance $\mathcal{D}(DB_n, DB_n-e_1)$ increases. This is reasonable and agrees with our expectation. Because the graph would be nearly disconnected if we remove an edge $e_1$. Naturally, the importance of the edge $e_1$ would increase as the number of vertices increases. 

On the other hand, as $n$ increases, the distance $\mathcal{D}(DB_n, DB_n-e_4)$ decreases. Intuitively, this is reasonable since the number of elements in the orbit of $e_4$ under the automorphism group of $DB_n$ increases fast, therefore deleting a single such edge becomes a less significant perturbation. 

Furthermore, as $n$ increases, the distance $\mathcal{D}(DB_n, DB_n-e_3)$ increases. An intuitive explanation is that the number of elements in its orbit under the automorphism group, grows only linearly in $n$, and that the edge $e_3$ connects the important edge $e_1$ and the unimportant edge $e_4$. 

Interestingly, as $n$ increases, the distance $\mathcal{D}(DB_n, DB_n-e_2)$ decreases, which might seem unintuitive. 
To explain this phenomenon, we recall the discussions around Equation \eqref{2} that, difference of values of eigenfunctions at two vertices of an edge, contribute to the difference of two-point correlation functions of the graph with its one-edge perturbation by this edge, and roughly the two-point correlation function is more sensitive to the difference at smaller eigenvalues. Based on this, for each $n$, we check whether for every small eigenvalue $\lambda^{DB_n}$ of the graph $DB_n$, the corresponding eigenfunction $\phi^{DB_n}$ has the property that the value $|\phi^{DB_n}(e_2(1))-\phi^{DB_n}(e_2(2))|$ is relatively small. 

Note that apart from $\lambda^{DB_n}_0=0$, the only small (less than $1$) eigenvalue of a dumbbell graph $DB_n$ is $\lambda^{DB_n}_1$. Others are greater than or equal to $n$. Thus, for each $n$, we compute the value $|\phi^{DB_n}(e_i(1))-\phi^{DB_n}(e_i(2))|$, for $i=1, \ldots, 4$. 
As we expected, the result, shown in Table \ref{tab:dumbbellcheck}, indicates that the value $|\phi^{DB_n}(e_2(1))-\phi^{DB_n}(e_2(2))|$ is indeed relatively small. 

\begin{table}[h!]
\centering
\begin{tabular}{c||c|c|c|c}
\hline
\rowcolor{blue!10}
& \hspace{2cm} & \hspace{2cm} & \hspace{2cm} & \hspace{2cm} \\
\rowcolor{blue!10}
\multirow{-2}{*}{\backslashbox{$n$}{$i$}} & \multirow{-2}{*}{$1$} & \multirow{-2}{*}{$2$} & \multirow{-2}{*}{$3$} & \multirow{-2}{*}{$4$}\\
\hline\hline
5  &  $4.8876 \times 10^{-1}$ & 0 & $1.1179 \times 10^{-1}$ & 0 \\
\hline
10  & $3.8268 \times 10^{-1}$ & 0 & $3.9628 \times 10^{-2}$ & 0 \\
\hline
20  & $2.8978\times 10^{-1}$ & 0 & $1.4623 \times 10^{-2}$ & 0 \\
\hline
50  & $1.9259 \times 10^{-1}$ & 0 & $3.8577 \times 10^{-3}$ & 0 \\
\hline
100 & $1.3870 \times 10^{-1}$ & 0 & $1.3876 \times 10^{-3}$ & 0 \\
\hline
\end{tabular}
\caption{The difference $|\phi^{DB_n}_1(e_i(1)) - \phi^{DB_n}_1(e_i(2))|$.}
\label{tab:dumbbellcheck}
\end{table}

\subsection{Complete Graphs, Cycles and Paths}
 We next consider the complete graphs, which is one of the extreme cases. In the following, we compute the distance $\mathcal{D}(K_n, K_n-e)$, where $K_n-e$ denotes the graph constructed by removing an edge from a complete graph $K_n$. Then we compute the distance between cycles and paths $\mathcal{D}(C_n, P_n)$, where $C_n$ denotes the cycle of $n$ vertices and $P_n$ denotes the path of $n$ vertices. 

According to the result, shown in Table \ref{tab:complete}, we can see that the graph distances by one-edge perturbation on complete graphs are relatively small. From Figure \ref{fig:complete}, it seems that $2$ may be a lower bound for $\mathcal{D}(K_n, K_n-e)$. 

On the other hand, the distances between cycles and paths as their one-edge perturbations are huge, as we would expect, as one is simply connected, while the other one is not. 

\begin{table}[h!]
\begin{tabular}{c||c|c}
\hline
\rowcolor{blue!10}
$n$ & $\mathcal{D}(K_n,K_n-e)$ & \hspace{0.5cm}$\mathcal{D}(C_n, P_n)$\hspace{0.5cm} \\
\hline\hline
5& 3.3330 & $4.6894\times 10^{1}$\\ 
\hline
10& 2.4998 & $7.0100\times 10^{2}$ \\
\hline
20& 2.2221 & $1.1096\times 10^{4}$\\
\hline
50 & 2.0832 & $4.3251\times 10^{5}$\\
\hline
100 & 2.0407 & $6.9186\times 10^{6}$\\
\hline
200 & 2.0201 & $1.1069\times 10^{8}$\\
\hline
500 & 2.0079 & $4.3246\times 10^{9}$\\
\hline
\end{tabular}
\caption{The distance of graphs by one-edge perturbation}
\label{tab:complete}
\end{table}

\begin{figure}[h!]
\centering
\includegraphics[height=6cm]{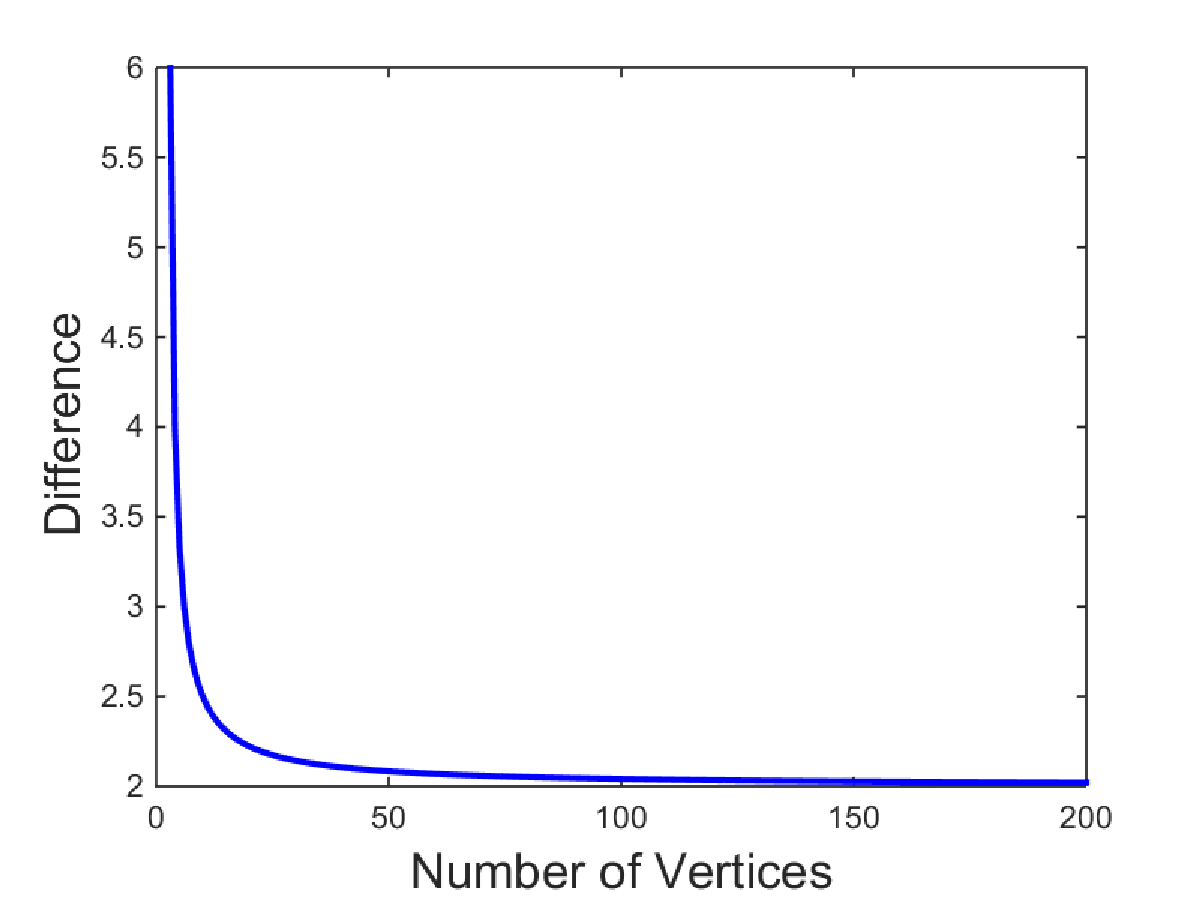}
\caption{The relation between $n$ and $\mathcal{D}(K_n, K_n-e)$}
\label{fig:complete}
\end{figure}

\subsection{Nearly Complete Graphs}
We next consider pairs of nearly complete graphs with the same number of edges. For convenience, we denote the set of all graphs constructed by removing $m$ edges of a complete graph $K_n$ by $\mathscr{K}_n^{(-m)}$. Then we compute the minimal distance among the pairs of graphs in $\mathscr{K}_n^{(-m)}$, namely,  
\[
\min\left\{ \mathcal{D}(G,G_1) \left| G \not\cong G_1, G, G_1 \in \mathscr{K}_n^{(-m)} \right. \right\}, 
\]
for $n = 4, \ldots, 10$, respectively. 

\begin{table}[h!]
\centering
\begin{tabular}{c|cccccc}
\hline
\rowcolor{blue!10} &&&&&&\\
\rowcolor{blue!10} \multirow{-2}{*}{\backslashbox{$n$}{$m$}}& \multirow{-2}{*}{2} & \multirow{-2}{*}{3} & \multirow{-2}{*}{4} & \multirow{-2}{*}{5} & \multirow{-2}{*}{6} & \multirow{-2}{*}{7}\\
\hline\hline
4& 7.7736  & 11.9988    \\ 
\hline
5& 4.1871  & 4.5703  & 4.6263 & 11.3030 \\ 
\hline
6& 3.1366  & 3.1366  & 1.2674 & 2.0128 & 2.9203 & 3.6825 \\ 
\hline
7 & 2.6525 & 2.6525 & $0.8294$ & $0.8417$ & {\color{red} $0.2750$} & {\color{red} $0.5499$} \\
\hline
8 & 2.3777 & 2.3777 & 0.6080 & 0.6080 & 0.1317 & 0.1532 \\
\hline
9 & 2.2015 & 2.2015 & 0.4766 & 0.4766 & 0.0874 & 0.0985\\
\hline
10 & 2.0793  & 2.0793 & 0.3904 & 0.3904 & 0.0621 & 0.0621\\
\hline
\end{tabular}
\caption{The minimal distance among the pairs of graphs in $\mathscr{K}_n^{(-m)}$.}
\label{tab:nearlycomplete}
\end{table}

The result, shown in Table \ref{tab:nearlycomplete}, indicates that the distance between two nonisomorphic graphs measured by $\mathcal{D}$ could be very small. 

Furthermore, in this experiment, we observe that for each pair of graphs above with very small distance (e.g. less than $2$ among these examples), they share the same degree vector of vertices. For instance, the pair of graphs in $\mathscr{K}_6^{(-4)}$ with distance $1.2674$, shown in Figure \ref{fig:nc6(-4)}, are of the same degree of vertices $(3,3,4,4,4,4)$; the pair of graphs in $\mathscr{K}_7^{(-6)}$ with distance $0.2750$, shown in Figure \ref{fig:nc7(-6)}, are of the same degree of vertices $(3,4,4,4,5,5,5)$. 

\begin{figure}[h!]
\centering
\begin{tabular}{cc}
\includegraphics[height=3cm]{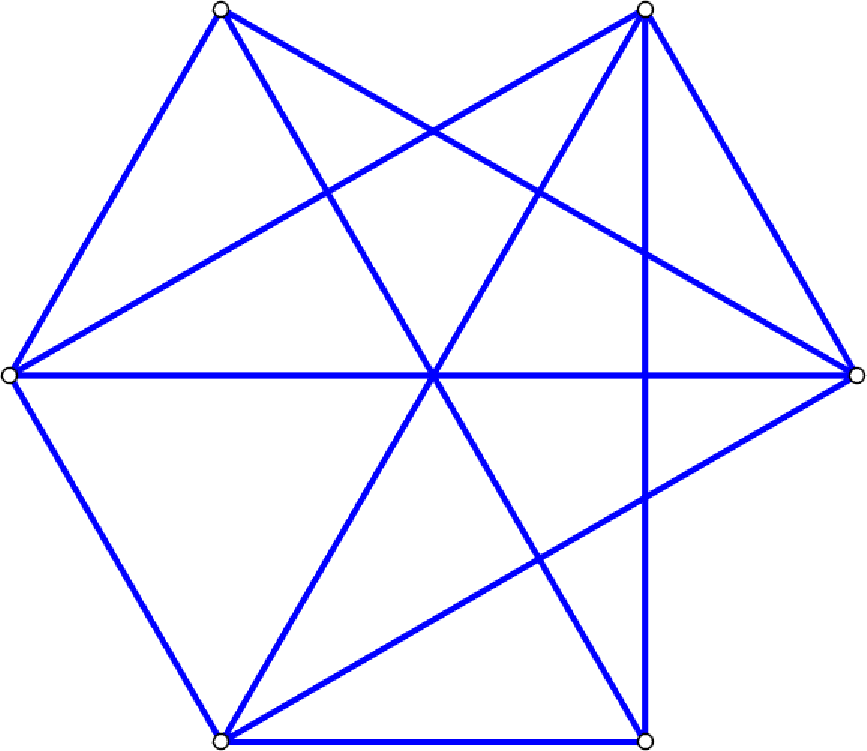} & \includegraphics[height=3cm]{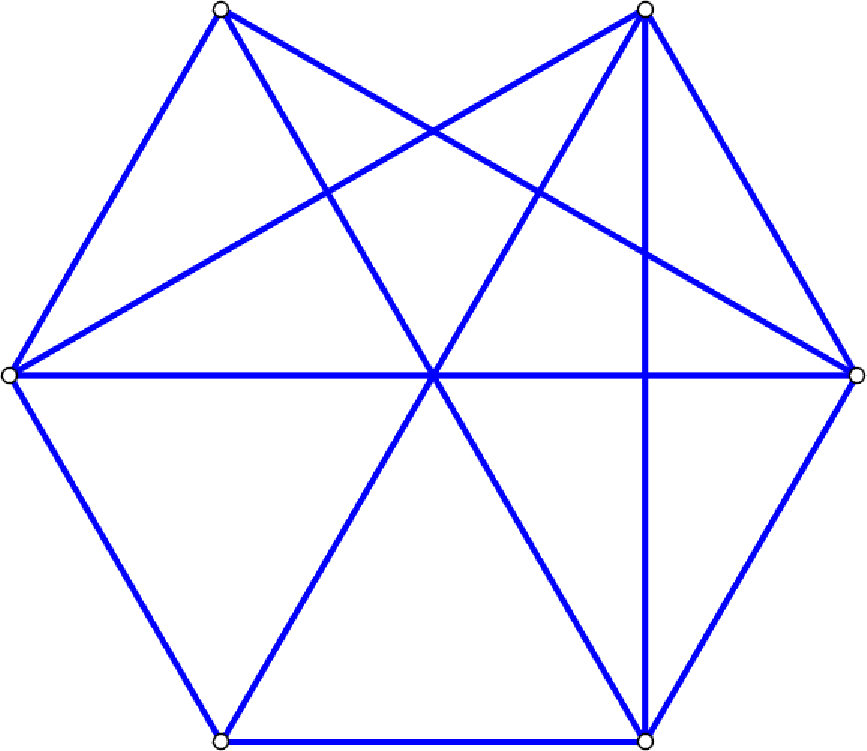} \\
$G\in\mathscr{K}_6^{(-4)}$ & $G_1\in\mathscr{K}_6^{(-4)}$
\end{tabular}
\caption{A pair of graphs in $\mathscr{K}_6^{(-4)}$ with distance $\mathcal{D}(G,G_1)=1.2674$.}
\label{fig:nc6(-4)}
\end{figure}

\begin{figure}[h!]
\centering
\begin{tabular}{cc}
\includegraphics[height=3cm]{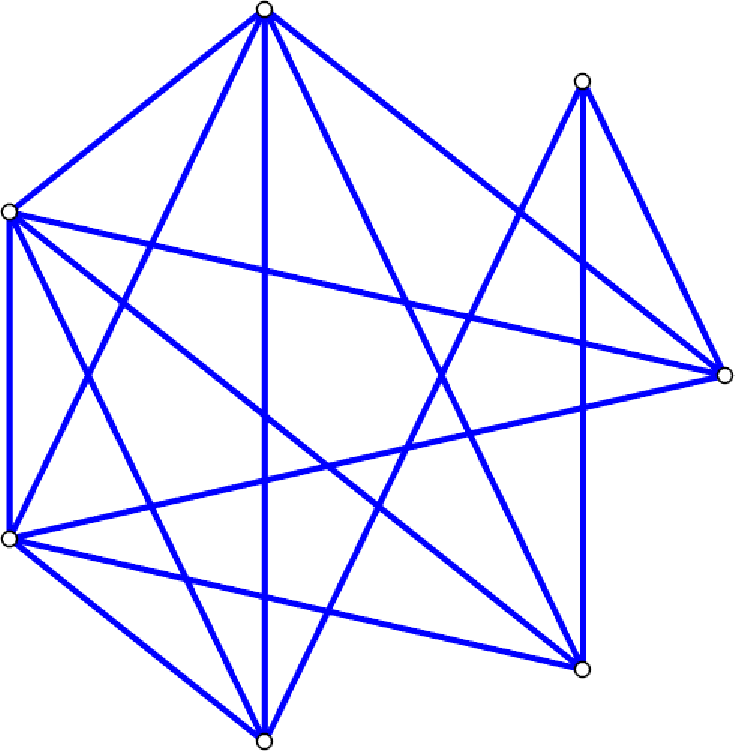} & \includegraphics[height=3cm]{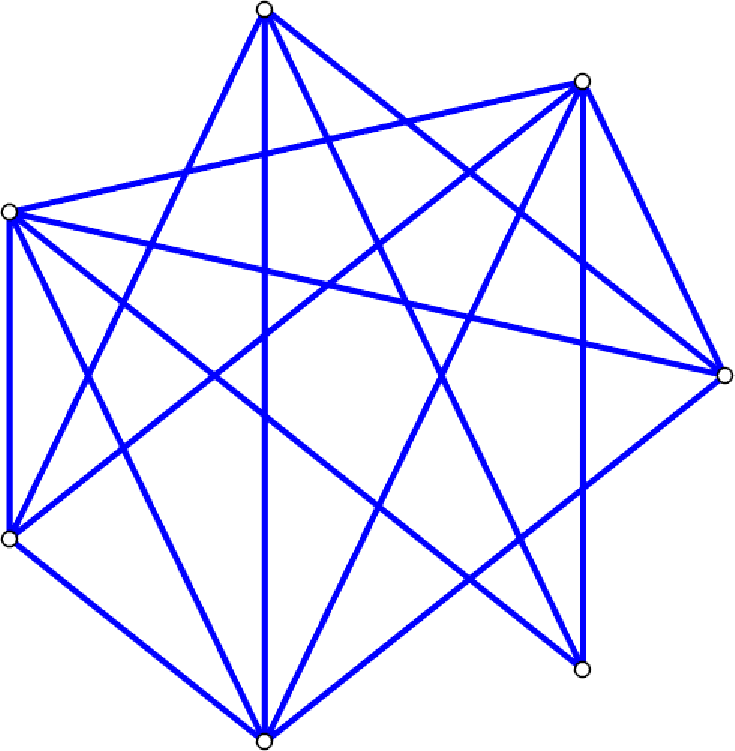} \\
$G\in\mathscr{K}_7^{(-6)}$ & $G_1\in\mathscr{K}_7^{(-6)}$
\end{tabular}
\caption{A pair of (cospectral) graphs in $\mathscr{K}_7^{(-6)}$ with distance $\mathcal{D}(G,G_1)=0.2750$.}
\label{fig:nc7(-6)}
\end{figure}

In addition, the pair of graphs, shown in Figure \ref{fig:nc7(-6)}, are actually cospectral. In Table \ref{tab:nearlycomplete}, we mark the number in red if the pair of graphs with minimal graph distance are actually cospectral. 

For the larger nearly complete graphs, we compute the minimal distance among $10000$ randomly picked nonisomorphic pairs of graphs in $\mathscr{K}_n^{(-m)}$, for $n=50, 100, 200$. The result is shown in Table \ref{tab:nearlycompletelarge}. 

\begin{table}[h!]
\centering
\begin{tabular}{c|cccccccc}
\hline
\rowcolor{blue!10}&&&&&&&&\\
\rowcolor{blue!10} \multirow{-2}{*}{\backslashbox{$n$}{$m$}}& \multirow{-2}{*}{2} & \multirow{-2}{*}{3} & \multirow{-2}{*}{4} & \multirow{-2}{*}{5} & \multirow{-2}{*}{6} & \multirow{-2}{*}{7} &  \multirow{-2}{*}{8} & \multirow{-2}{*}{9} \\
\hline\hline
50 & 1.5057 & 1.5057 & 0.0448 & 0.0448 & 0.0448 & 0.0448 & 0.0011 & 0.0011 \\
\hline
100 & 1.4582 & 1.4582 & 1.4581 & 0.0211 & 0.0211 & 0.0211 & 0.0211 & 0.0211 \\
\hline
200 & 1.4358 & 1.4358 & 1.4358 & 1.4358 & 1.4357 & 1.4357 & 1.4357 & 0.0103\\
\hline
\end{tabular}
\caption{The minimal distance of $10000$ trials among the pairs of graphs in $\mathscr{K}_n^{(-m)}$.}
\label{tab:nearlycompletelarge}
\end{table}

\clearpage

\section{Tables for graphs and quadratic forms}

\begin{table}[h!]
\centering
\begin{tabular}{c|c|c||c|c}
\hline
\rowcolor{blue!10}
{\small Graph Type} & {\small \#Graphs} & ${\small {\rm Eigenvalues}^{\rm Multiplicities}}$ &${\small m}$ & {\small \#Quadratic Forms} \\
\hline\hline
\multirow{3}{*}{srg(16, 6, 2, 2)} & \multirow{3}{*}{2} & \multirow{3}{*}{$(-2)^9, 2^6, 6^1$} & 2 & 2 \\ 
&&& $-2$ &{\color{red} 1}\\
&&& $-6$ &2\\
\hline
\multirow{5}{*}{srg(25, 12, 5, 6)} & \multirow{5}{*}{15} & \multirow{5}{*}{$(-3)^{12}, 2^{12}, 12^1$} & 0 & 15 \\ 
  &  && 2 & 15 \\ 
&&& $3$ & {\color{red} 4} $[10,3,1,1]$\\
&&& $-2$ & {\color{red} 4} $[10,3,1,1]$\\
&&& $-12$ & 15\\
\hline
\multirow{5}{*}{srg(26, 10, 3, 4)} & \multirow{5}{*}{10} & \multirow{5}{*}{$(-3)^{12}, 2^{13}, 10^1$} & 0 & 10 \\ 
  &  && 2 & 10 \\ 
  &  && 3 & {\color{red} 3} $[7,2,1]$ \\ 
  &  && $-2$ & {\color{red} 4} $[5,2,2,1]$ \\ 
  &  && $-10$ & 10 \\ 
\hline
\multirow{4}{*}{srg(28, 12, 6, 4)} & \multirow{4}{*}{4} & \multirow{4}{*}{$(-2)^{20}, 4^7, 12^1$} & 0 & 4 \\ 
 &  && 2 & {\color{red} 2} $[3,1]$ \\ 
 &  && $-4$ & 4 \\ 
 &  && $-12$ & 4 \\ 
\hline
\multirow{3}{*}{srg(29, 14, 6, 7)} & \multirow{3}{*}{41} & \multirow{3}{*}{$(\frac{-1 \pm \sqrt{29}}{2})^{14}, 14^1$} & 0 & 41 \\ 
 &  && 2 & 41 \\ 
 &  && $-14$ & 41 \\ 
\hline
srg(35, 18, 9, 9) & 3854 & $(-3)^{20}, 3^{14}, 18^1$ & 2 & 3854 \\ 
\hline
\multirow{5}{*}{srg(36, 14, 4, 6)} & \multirow{5}{*}{180} & \multirow{5}{*}{$(-4)^{14}, 2^{21}, 14^1$} & 0 & 180 \\
 &  && 2 & 180 \\ 
 &  && 4 & {\color{red} 155} \\ 
 &  && $-2$ & {\color{red} 9} $[66,44,43,11,9,2,2,2,1]$  \\ 
 &  && $-14$ & 180 \\ 
\hline
\multirow{5}{*}{srg(40, 12, 2, 4)} & \multirow{5}{*}{28} & \multirow{5}{*}{$(-4)^{15}, 2^{24}, 12^1$} & 0 & 28 \\ 
 &  && 2 & 28 \\ 
 &  && 4 & 28  \\ 
 &  && $-2$ & {\color{red} 6} $[13,8,3,2,1,1]$ \\ 
 &  && $-12$ & 28  \\ 
\hline
\multirow{4}{*}{srg(45, 12, 3, 3)} & \multirow{4}{*}{78} & \multirow{4}{*}{$(-3)^{24}, 3^{20}, 12^1$} & 2 & 78 \\ 
 &  && 3 & {\color{red} 21}  \\ 
 &  && $-3$ & {\color{red} 76} \\ 
 &  && $-12$ & 78 \\ 
\hline
\multirow{5}{*}{srg(50, 21, 8, 9)} & \multirow{5}{*}{18} & \multirow{5}{*}{$(-4)^{24}, 3^{25}, 21^1$} & 0 & 18 \\ 
 &  && 2 & 18 \\ 
 &  && 4 & {\color{red} 17}\\ 
 &  && $-3$ & {\color{red} 5} $[10,4,2,1,1]$ \\ 
 &  && $-21$ & 18 \\ 
\hline
\multirow{4}{*}{srg(64, 18, 2, 6)} & \multirow{4}{*}{167} & \multirow{4}{*}{$(-6)^{18}, 2^{45}, 18^1$} & 0 & 167 \\ 
 &  && 6 & 167 \\ 
 &  && $-2$ & {\color{red} 4} $[156,9,1,1]$ \\ 
 &  && $-18$ & 167 \\ 
\hline
\end{tabular}
\caption{The number of distinct quadratic forms $(A+mI)^2$ with respect to ${\rm srg}(n,k,\lambda,\mu)$.}
\label{tab:srg}
\end{table}
\clearpage

\begin{table}
\centering
\begin{tabular}{c||c|l|c}
\hline
\rowcolor{blue!10}
{\small Graph Type}  & $p$ & $p$-Adic Symbols & \#Graphs \\
\hline\hline
\multirow{2}{*}{srg(16, 6, 2, 2)}  & 3  & $1^{10-} 3^{6+}$ & \multirow{2}{*}{2} \\  
 & 7 & $1^{15-} 7^{1+}$ \\
\hline
\multirow{4}{*}{srg(25, 12, 5, 6)} & 3 & $1^{13+} 3^{12+}$ & \multirow{2}{*}{10} \\
  & 13 & $1^{24+} 13^{1+}$ \\ 
\cline{2-4}
 & 3 & $1^{13-} 3^{12-}$ & \multirow{2}{*}{5} \\
 &13 & $1^{24+} 13^{1+}$ \\
\hline
\multirow{4}{*}{srg(26, 10, 3, 4)}  & 3 & $1^{13-} 3^{13+}$ & \multirow{2}{*}{3} \\ 
  & 11  & $1^{25+} 11^{1+}$ \\ 
\cline{2-4}
  & 3  & $1^{13+} 3^{13-}$ & \multirow{2}{*}{7} \\ 
  & 11  & $1^{25+} 11^{1+}$ \\ 
\hline
\multirow{2}{*}{srg(28, 12, 6, 4)}  & 5 & $1^{21+} 5^{7-}$ & \multirow{2}{*}{4}\\ 
   & 13 & $1^{27+} 13^{1-}$ \\ 
\hline
\multirow{3}{*}{srg(29, 14, 6, 7)}  & 3 & $1^{28-} 3^{1+}$ & \multirow{3}{*}{41} \\ 
 & 5 & $1^{28+} 5^{1-}$  \\ 
 & 7 & $1^{15-} 7^{14-}$ \\ 
\hline
\multirow{1}{*}{srg(35, 18, 9, 9)}  & 19 & $1^{34+} 19^{1+}$ & \multirow{1}{*}{3854} \\ 
\hline
\multirow{8}{*}{srg(36, 14, 4, 6)} & 3 & $1^{14-} 3^{8-} 9^{14-}$ & \multirow{2}{*}{132} \\
& 5 & $1^{35-} 5^{1-}$ \\
\cline{2-4}
& 3 & $1^{14+} 3^{8-} 9^{14+}$ & \multirow{2}{*}{44} \\
& 5 & $1^{35-} 5^{1-}$  \\
\cline{2-4}
& 3 & $1^{13+} 3^{10+} 9^{13-}$ & \multirow{2}{*}{2} \\
& 5 & $1^{35-} 5^{1-}$  \\
\cline{2-4}
& 3 & $1^{12-} 3^{12-} 9^{12-}$ & \multirow{2}{*}{2} \\
& 5 & $1^{35-} 5^{1-}$  \\
\hline
\multirow{8}{*}{srg(40, 12, 2, 4)} & 3 & $1^{15-} 3^{11-} 9^{14-}$ & \multirow{2}{*}{11} \\ 
& 13 & $1^{39+} 13^{1+}$   \\ 
\cline{2-4}
& 3 & $1^{15+} 3^{11-} 9^{14+}$ & \multirow{2}{*}{13} \\ 
& 13 & $1^{39+} 13^{1+}$   \\ 
\cline{2-4}
& 3 & $1^{13-} 3^{15-} 9^{12-}$ & \multirow{2}{*}{3} \\ 
& 13 & $1^{39+} 13^{1+}$   \\ 
\cline{2-4}
& 3 & $1^{11+} 3^{19-} 9^{10+}$ & \multirow{2}{*}{1} \\ 
& 13 & $1^{39+} 13^{1+}$   \\ 
\hline
srg(45, 12, 3, 3) & 13 & $1^{44-} 13^{1-}$ & 78  \\ 
\hline
\multirow{2}{*}{srg(50, 21, 8, 9)} & 3 & $1^{26+} 3^{24+}$ & \multirow{2}{*}{18} \\
& 11 & $1^{49-} 11^{1+}$ \\ 
\hline
\multirow{3}{*}{srg(64, 18, 2, 6)} & 3 & $1^{19+} 3^{45+}$ & \multirow{3}{*}{167} \\ 
& 5 & $1^{46-} 5^{18+}$\\
& 19 & $1^{63-} 19^{1+}$\\
\hline
\end{tabular}
\caption{The $p$-adic symbols for quadratic form $A+I$ with respect to ${\rm srg}(n,k,\lambda,\mu)$.}
\label{tab:p-adic}
\end{table}
\clearpage

\begin{table}
\centering
\begin{tabular}{c||c|l|c}
\hline
\rowcolor{blue!10}
{\small Graph Type}  & $p$ & $p$-Adic Symbols & \#Graphs \\
\hline\hline
\multirow{4}{*}{srg(16, 6, 2, 2)}  & 2 & $[0, 6, 7, 0, 0], [1, 4, 1, 0, 0], [2, 6, 3, 1, 4]$ & \multirow{2}{*}{1}  \\  
& 3 & $[0, 15, 1], [1, 1, -1]$ \\
\cline{2-4}
& 2 & $[0, 6, 3, 0, 0], [1, 4, 3, 1, 2], [2, 6, 5, 1, 6]$ & \multirow{2}{*}{1} \\
& 3 & $[0, 15, 1], [1, 1, -1]$ \\
\cline{2-4}
\hline
\multirow{4}{*}{srg(25, 12, 5, 6)} & 2 & $[0, 12, 1, 0, 0], [1, 12, 1, 0, 0], [2, 1, 3, 1, 3]$ & \multirow{2}{*}{10} \\
& 3 & $[0, 12, 1], [1, 13, 1]$ \\
\cline{2-4}
& 2 & $[0, 12, 5, 0, 0], [1, 12, 5, 0, 0], [2, 1, 3, 1, 3]$ & \multirow{2}{*}{5} \\
& 3 & $[0, 12, -1], [1, 13, -1]$ \\
\hline
\multirow{12}{*}{srg(26, 10, 3, 4)}  & 2 & $[0, 12, 1, 0, 0], [1, 14, 5, 1, 6]$ & \multirow{3}{*}{5} \\ 
& 3 & $[0, 14, -1], [1, 12, 1]$ \\
& 5 & $[0, 25, -1], [1, 1, -1]$ \\
\cline{2-4}
& 2 & $[0, 12, 5, 0, 0], [1, 14, 1, 1, 6]$ & \multirow{3}{*}{2} \\
& 3 & $[0, 14, 1], [1, 12, -1]$ \\
& 5 & $[0, 25, -1], [1, 1, -1]$ \\
\cline{2-4}
& 2 & $[0, 12, 5, 0, 0], [1, 14, 1, 1, 2]$ & \multirow{3}{*}{2} \\
& 3 & $[0, 14, -1], [1, 12, 1]$ \\
& 5 & $[0, 25, -1], [1, 1, -1]$ \\
\cline{2-4}
& 2 & $[0, 12, 1, 0, 0], [1, 14, 5, 1, 2]$ & \multirow{3}{*}{1} \\
& 3 & $[0, 14, 1], [1, 12, -1]$ \\
& 5 & $[0, 25, -1], [1, 1, -1]$ \\
\hline
\multirow{4}{*}{srg(28, 12, 6, 4)} & 2 & $[0, 8, 5, 0, 0], [1, 12, 5, 0, 0], [3, 8, 3, 1, 6]$ & \multirow{2}{*}{3} \\ 
& 3 & $[0, 27, 1], [1, 1, 1]$ \\
\cline{2-4}
& 2 & $[0, 6, 3, 0, 0], [1, 15, 7, 1, 5], [3, 7, 7, 1, 1]$ & \multirow{2}{*}{1} \\
& 3 & $[0, 27, 1], [1, 1, 1]$ \\
\hline
\multirow{2}{*}{srg(29, 14, 6, 7)} & 2 & $[0, 28, 1, 0, 0], [1, 1, 7, 1, 7]$ & \multirow{2}{*}{41} \\ 
& 7 & $[0, 14, -1], [1, 15, -1]$ \\
\hline
\multirow{6}{*}{srg(35, 18, 9, 9)} & 2 & $[0, 34, 7, 0, 0], [1, 1, 7, 1, 7]$ & \multirow{2}{*}{3816} \\
& 3 & $[0, 13, -1], [1, 8, -1], [2, 14, -1]$ \\ 
\cline{2-4}
& 2 & $[0, 34, 7, 0, 0], [1, 1, 7, 1, 7]$ & \multirow{2}{*}{37} \\
& 3 & $[0, 13, 1], [1, 8, -1], [2, 14, 1]$ \\
\cline{2-4}
& 2 & $[0, 34, 7, 0, 0], [1, 1, 7, 1, 7]$ & \multirow{2}{*}{1} \\
& 3 & $[0, 11, -1], [1, 12, -1], [2, 12, -1]$ \\
\hline
\multirow{12}{*}{srg(36, 14, 4, 6)} & 2 & $[0, 14, 7, 0, 0], [1, 8, 5, 0, 0], [3, 14, 5, 1, 6]$ & \multirow{2}{*}{109}  \\ 
& 7 & $[0, 35, 1], [1, 1, 1]$ \\
\cline{2-4}
& 2 & $[0, 14, 3, 0, 0], [1, 8, 1, 0, 0], [3, 14, 5, 1, 2]$ & \multirow{2}{*}{48}  \\ 
& 7 & $[0, 35, 1], [1, 1, 1]$ \\
\cline{2-4}
& 2 & $[0, 12, 1, 0, 0], [1, 10, 3, 0, 0], [2, 2, 7, 0, 0], [3, 12, 3, 1, 6]$ & \multirow{2}{*}{19}  \\
& 7 & $[0, 35, 1], [1, 1, 1]$ \\
\cline{2-4}
& 2 & $[0, 12, 1, 0, 0], [1, 10, 3, 0, 0], [2, 2, 3, 0, 0], [3, 12, 7, 1, 2]$ &  \multirow{2}{*}{1} \\
& 7 & $[0, 35, 1], [1, 1, 1]$ \\
\cline{2-4}
& 2 & $[0, 10, 7, 0, 0], [1, 12, 5, 0, 0], [2, 4, 1, 0, 0], [3, 10, 5, 1, 6]$ & \multirow{2}{*}{2}  \\
& 7 & $[0, 35, 1], [1, 1, 1]$ \\
\cline{2-4}
& 2 & $[0, 8, 1, 0, 0], [1, 14, 3, 0, 0], [2, 6, 7, 0, 0], [3, 8, 3, 1, 6]$ & \multirow{2}{*}{1}  \\
& 7 & $[0, 35, 1], [1, 1, 1]$ \\
\hline
\end{tabular}
\caption{The $p$-adic symbols for quadratic form $A$ with respect to ${\rm srg}(n,k,\lambda,\mu)$ by using Sage.}
\label{tab:p-adic_srg}
\end{table}

\clearpage

\begin{table}
\centering
\begin{tabular}{c||c|l|c}
\hline
\rowcolor{blue!10}
{\small Graph Type}  & $p$ & $p$-Adic Symbols & \#Graphs \\
\hline\hline
\multirow{10}{*}{srg(40, 12, 2, 4)} & 2 & $[0, 16, 5, 0, 0], [1, 8, 1, 0, 0], [3, 16, 1, 1, 4]$ & \multirow{2}{*}{17} \\
& 3 & $[0, 39, -1], [1, 1, 1]$ \\ 
\cline{2-4}
& 2 & $[0, 14, 7, 0, 0], [1, 10, 3, 0, 0], [2, 2, 3, 0, 0], [3, 14, 3, 1, 4]$ & \multirow{2}{*}{7} \\
& 3 & $[0, 39, -1], [1, 1, 1]$ \\ 
\cline{2-4}
& 2 & $[0, 12, 1, 0, 0], [1, 12, 5, 0, 0], [2, 4, 5, 0, 0], [3, 12, 5, 1, 4]$ & \multirow{2}{*}{2} \\
& 3 & $[0, 39, -1], [1, 1, 1]$ \\ 
\cline{2-4}
& 2 & $[0, 14, 7, 0, 0], [1, 10, 3, 0, 0], [2, 2, 7, 0, 0], [3, 14, 7, 1, 0]$ & \multirow{2}{*}{1} \\
& 3 & $[0, 39, -1], [1, 1, 1]$ \\ 
\cline{2-4}
& 2 & $[0, 10, 7, 0, 0], [1, 14, 3, 0, 0], [2, 6, 3, 0, 0], [3, 10, 3, 1, 4]$ & \multirow{2}{*}{1} \\
& 3 & $[0, 39, -1], [1, 1, 1]$ \\ 
\hline
\multirow{18}{*}{srg(45, 12, 3, 3)} & 2 & $[0, 44, 5, 0, 0], [2, 1, 7, 1, 7]$ & \multirow{2}{*}{19} \\
& 3 & $[0, 20, 1], [1, 5, 1], [2, 20, 1]$ \\
\cline{2-4}
& 2 & $[0, 44, 5, 0, 0], [2, 1, 7, 1, 7]$ & \multirow{2}{*}{18} \\
& 3 & $[0, 19, 1], [1, 7, -1], [2, 19, -1]$ \\
\cline{2-4}
& 2 & $[0, 44, 5, 0, 0], [2, 1, 7, 1, 7]$ & \multirow{2}{*}{13} \\
& 3 & $[0, 20, -1], [1, 5, 1], [2, 20, -1]$ \\
\cline{2-4}
& 2 & $[0, 44, 5, 0, 0], [2, 1, 7, 1, 7]$ & \multirow{2}{*}{8} \\
& 3 & $[0, 19, -1], [1, 7, -1], [2, 19, 1]$ \\
\cline{2-4}
& 2 & $[0, 44, 5, 0, 0], [2, 1, 7, 1, 7]$ & \multirow{2}{*}{6} \\
& 3 & $[0, 18, 1], [1, 9, 1], [2, 18, 1]$ \\
\cline{2-4}
& 2 & $[0, 44, 5, 0, 0], [2, 1, 7, 1, 7]$ & \multirow{2}{*}{6} \\
& 3 & $[0, 17, -1], [1, 11, -1], [2, 17, 1]$ \\
\cline{2-4}
& 2 & $[0, 44, 5, 0, 0], [2, 1, 7, 1, 7]$ & \multirow{2}{*}{3} \\
& 3 & $[0, 18, -1], [1, 9, 1], [2, 18, -1]$ \\
\cline{2-4}
& 2 & $[0, 44, 5, 0, 0], [2, 1, 7, 1, 7]$ & \multirow{2}{*}{3} \\
& 3 & $[0, 17, 1], [1, 11, -1], [2, 17, -1]$ \\
\cline{2-4}
& 2 & $[0, 44, 5, 0, 0], [2, 1, 7, 1, 7]$ & \multirow{2}{*}{2} \\
& 3 & $[0, 15, 1], [1, 15, -1], [2, 15, -1]$ \\
\hline
\multirow{3}{*}{srg(50, 21, 8, 9)} & 2 & $[0, 26, 7, 0, 0], [2, 24, 1, 0, 0]$ & \multirow{3}{*}{18}  \\
& 3 & $[0, 24, 1], [1, 26, 1]$ \\
& 7 & $[0, 49, -1], [1, 1, -1]$ \\
\hline
\multirow{8}{*}{srg(64, 18, 2, 6)} & 2 & $[0, 18, 7, 0, 0], [1, 28, 5, 0, 0], [2, 18, 3, 1, 4]$ & \multirow{2}{*}{155} \\
& 3 & $[0, 45, -1], [1, 18, 1], [2, 1, -1]$ \\ 
\cline{2-4}
& 2 & $[0, 16, 1, 0, 0], [1, 32, 5, 0, 0], [2, 16, 5, 1, 4]$ & \multirow{2}{*}{10} \\
& 3 & $[0, 45, -1], [1, 18, 1], [2, 1, -1]$ \\
\cline{2-4}
& 2 & $[0, 14, 7, 0, 0], [1, 36, 5, 0, 0], [2, 14, 3, 1, 4]$ & \multirow{2}{*}{1} \\
& 3 & $[0, 45, -1], [1, 18, 1], [2, 1, -1]$ \\
\cline{2-4}
& 2 & $[0, 18, 7, 0, 0], [1, 28, 1, 0, 0], [2, 18, 7, 1, 0]$ & \multirow{2}{*}{1} \\
& 3 & $[0, 45, -1], [1, 18, 1], [2, 1, -1]$  \\
\hline
\end{tabular}
\caption{The $p$-adic symbols for quadratic form $A$ with respect to ${\rm srg}(n,k,\lambda,\mu)$ by using Sage.}
\label{tab:p-adic_srg2}
\end{table}
\clearpage

\begin{figure}
\centering
\includegraphics[height=6cm]{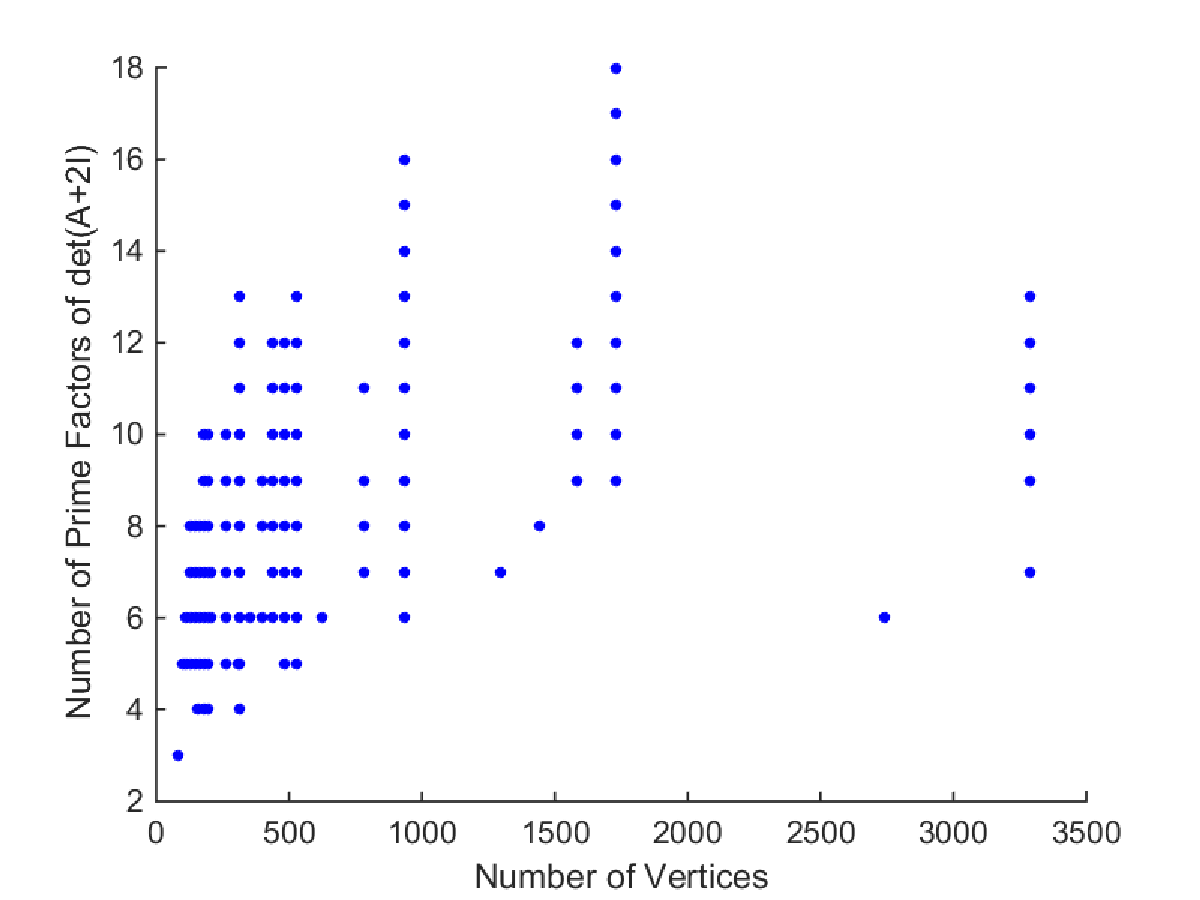}
\caption{The number of distinct prime factors of $\det(A+2I)$}
\label{fig:CFI}
\end{figure}
$~$

\begin{table}
\centering
\begin{tabular}{c|c||c|l}
\hline
\rowcolor{blue!10}
{\small Graph Type}  & \#Vertices & $p$ & $p$-Adic Symbols \\
\hline\hline
$\multirow{4}{*}{{\rm CFI}({\rm reg}(4,3))}$ & \multirow{4}{*}{40} & {\color{red} 2} & $[0, 30, 7, 0, 0], [1, 4, 1, {\color{red}0}, 0], [2, 6, 3, 1, 4]$  \\ 
& & 3 & $[0, 39, -1], [1, 1, 1]$ \\
\cline{3-4}
& & {\color{red} 2} & $[0, 30, 7, 0, 0], [1, 4, 1, {\color{red}1}, 0], [2, 6, 3, 1, 4]$  \\ 
& & 3 & $[0, 39, -1], [1, 1, 1]$ \\
\hline
$\multirow{6}{*}{{\rm CFI}({\rm reg}(8,3))}$ & \multirow{6}{*}{80} & {\color{red} 2} & $[0, 62, 7, 0, 0], [1, 6, 7, {\color{red}0}, 0], [2, 10, 7, 0, 0], [3, 2, 7, 0, 0]$  \\ 
& & 3 & $[0, 79, -1], [1, 1, -1]$ \\
& & 5 & $[0, 79, 1], [1, 1, -1]$ \\
\cline{3-4}
& & {\color{red} 2} & $[0, 62, 7, 0, 0], [1, 6, 7, {\color{red}1}, 0], [2, 10, 7, 0, 0], [3, 2, 7, 0, 0]$ \\
& & 3 & $[0, 79, -1], [1, 1, -1]$ \\
& & 5 & $[0, 79, 1], [1, 1, -1]$ \\
\hline
$\multirow{4}{*}{{\rm CFI}({\rm reg}(8,3))}$ & \multirow{4}{*}{80} & {\color{red} 2} &  $[0, 62, 7, 0, 0], [1, 8, 1, {\color{red}0}, 0], [2, 6, 7, 0, 0], [3, 4, 1, 0, 0]$  \\ 
& & 3 & $[0, 78, -1], [1, 2, -1]$ \\
\cline{3-4}
& & {\color{red} 2} & $[0, 62, 7, 0, 0], [1, 8, 1, {\color{red}1}, 0], [2, 6, 7, 0, 0], [3, 4, 1, 0, 0]$ \\
& & 3 & $[0, 78, -1], [1, 2, -1]$ \\
\hline
$\multirow{4}{*}{{\rm CFI}({\rm reg}(8,3))}$ & \multirow{4}{*}{80} & {\color{red} 2} &  $[0, {\color{red}60}, {\color{red}1}, 0, 0], [1, {\color{red}8}, 1, 0, 0], [2, {\color{red}12}, {\color{red}1}, 0, 0]$  \\ 
& & 3 & $[0, 78, -1], [1, 2, -1]$ \\
\cline{3-4}
& & {\color{red} 2} & $[0, {\color{red}62}, {\color{red}7}, 0, 0], [1, {\color{red}4}, 1, 0, 0], [2, {\color{red}14}, {\color{red}7}, 0, 0]$ \\
& & 3 & $[0, 78, -1], [1, 2, -1]$ \\
\hline
$\multirow{4}{*}{{\rm CFI}({\rm reg}(8,3))}$ & \multirow{4}{*}{80} 
& {\color{red}2} &  $[0, 62, 7, 0, 0], [1, 8, 1, {\color{red}0}, 0], [2, 6, 7, 0, 0], [3, 4, 5, 0, 0]$  \\ 
& & 3 & $[0, 79, 1], [1, 1, -1]$ \\
\cline{3-4}
& & {\color{red}2} & $[0, 62, 7, 0, 0], [1, 8, 1, {\color{red}1}, 0], [2, 6, 7, 0, 0], [3, 4, 5, 0, 0]$ \\
& & 3 & $[0, 79, 1], [1, 1, -1]$ \\
\hline
\end{tabular}
\caption{The $p$-adic symbols for quadratic form $A$ with respect to ${\rm CFI}({\rm reg}(n,3))$ by using Sage.}
\label{tab:p-adic_CFI}
\end{table}
\clearpage

\begin{table}
\centering
\begin{tabular}{c|c||c|l}
\hline
\rowcolor{blue!10}
{\small Graph Type}  & \#Vertices & $p$ & $p$-Adic Symbols \\
\hline\hline
$\multirow{4}{*}{{\rm CFI}({\rm reg}(10,3))}$ & \multirow{4}{*}{100} 
& {\color{red}2} &  $[0, 78, 7, 0, 0], [1, 10, 3, {\color{red}1}, 0], [2, 6, 7, 1, 0], [3, 6, 7, 0, 0]$  \\ 
& & 3 & $[0, 99, -1], [1, 1, 1]$ \\
\cline{3-4}
& & {\color{red}2} & $[0, 78, 7, 0, 0], [1, 10, 3, {\color{red}0}, 0], [2, 6, 7, 1, 0], [3, 6, 7, 0, 0]$ \\
& & 3 & $[0, 99, -1], [1, 1, 1]$ \\
\hline
$\multirow{4}{*}{{\rm CFI}({\rm reg}(10,3))}$ & \multirow{4}{*}{100} 
& {\color{red}2} &  $[0, 78, 7, 0, 0], [1, 12, 1, {\color{red}1}, 0], [2, 2, 7, 0, 0], [3, 8, 5, 0, 0]$  \\ 
& & 3 & $[0, 99, -1], [1, 1, 1]$ \\
\cline{3-4}
& & {\color{red}2} & $[0, 78, 7, 0, 0], [1, 12, 1, {\color{red}0}, 0], [2, 2, 7, 0, 0], [3, 8, 5, 0, 0]$ \\
& & 3 & $[0, 99, -1], [1, 1, 1]$ \\
\hline
$\multirow{6}{*}{{\rm CFI}({\rm reg}(10,3))}$ & \multirow{6}{*}{100} 
& {\color{red}2} &  $[0, 78, 7, 0, 0], [1, 12, 5, {\color{red}1}, 0], [2, 2, 7, 0, 0], [3, 8, 1, 0, 0]$  \\ 
& & 3 & $[0, 99, 1], [1, 1, 1]$ \\
& & 7 & $[0, 99, 1], [1, 1, -1]$ \\
\cline{3-4}
& & {\color{red}2} & $[0, 78, 7, 0, 0], [1, 12, 5, {\color{red}0}, 0], [2, 2, 7, 0, 0], [3, 8, 1, 0, 0]$ \\
& & 3 & $[0, 99, 1], [1, 1, 1]$ \\
& & 7 & $[0, 99, 1], [1, 1, -1]$ \\
\hline
$\multirow{4}{*}{{\rm CFI}({\rm reg}(10,3))}$ & \multirow{4}{*}{100} 
& {\color{red}2} &  $[0, 78, 7, 0, 0], [1, 8, 1, {\color{red}1}, 0], [2, 10, 7, 0, 0], [4, 4, 3, 1, 2]$  \\ 
& & 3 & $[0, 99, 1], [1, 1, 1]$ \\
\cline{3-4}
& & {\color{red}2} & $[0, 78, 7, 0, 0], [1, 8, 1, {\color{red}0}, 0], [2, 10, 7, 0, 0], [4, 4, 3, 1, 2]$ \\
& & 3 & $[0, 99, 1], [1, 1, 1]$ \\
\hline
$\multirow{4}{*}{{\rm CFI}({\rm reg}(10,3))}$ & \multirow{4}{*}{100} 
& {\color{red}2} &  $[0, 78, 7, 0, 0], [1, 10, 7, {\color{red}1}, 0], [2, 6, 3, 1, 4], [3, 6, 7, 0, 0]$  \\ 
& & 3 & $[0, 99, -1], [1, 1, 1]$ \\
\cline{3-4}
& & {\color{red}2} & $[0, 78, 7, 0, 0], [1, 10, 7, {\color{red}0}, 0], [2, 6, 3, 1, 4], [3, 6, 7, 0, 0]$ \\
& & 3 & $[0, 99, -1], [1, 1, 1]$ \\
\hline
$\multirow{4}{*}{{\rm CFI}({\rm reg}(10,3))}$ & \multirow{4}{*}{100} 
& {\color{red}2} &  $[0, 78, 7, 0, 0], [1, 12, 1, {\color{red}1}, 0], [2, 2, 7, 0, 0], [3, 6, 3, 0, 0], [4, 2, 7, 0, 0]$  \\ 
& & 3 & $[0, 99, -1], [1, 1, 1]$ \\
\cline{3-4}
& & {\color{red}2} & $[0, 78, 7, 0, 0], [1, 12, 1, {\color{red}0}, 0], [2, 2, 7, 0, 0], [3, 6, 3, 0, 0], [4, 2, 7, 0, 0]$ \\
& & 3 & $[0, 99, -1], [1, 1, 1]$ \\
\hline
$\multirow{4}{*}{{\rm CFI}({\rm reg}(10,3))}$ & \multirow{4}{*}{100} 
& {\color{red}2} &  $[0, 78, 7, 0, 0], [1, 12, 1, {\color{red}1}, 0], [2, 2, 7, 0, 0], [3, 6, 3, 0, 0], [4, 2, 1, 1, 6]$  \\ 
& & 3 & $[0, 99, 1], [1, 1, 1]$ \\
\cline{3-4}
& & {\color{red}2} & $[0, 78, 7, 0, 0], [1, 12, 1, {\color{red}0}, 0], [2, 2, 7, 0, 0], [3, 6, 3, 0, 0], [4, 2, 1, 1, 6]$ \\
& & 3 & $[0, 99, 1], [1, 1, 1]$ \\
\hline
$\multirow{4}{*}{{\rm CFI}({\rm reg}(10,3))}$ & \multirow{4}{*}{100} 
& {\color{red}2} &  $[0, 78, 7, 0, 0], [1, 12, 5, {\color{red}1}, 0], [2, 2, 7, 0, 0], [3, 8, 1, 0, 0]$  \\ 
& & 3 & $[0, 99, -1], [1, 1, 1]$ \\
\cline{3-4}
& & {\color{red}2} & $[0, 78, 7, 0, 0], [1, 12, 5, {\color{red}0}, 0], [2, 2, 7, 0, 0], [3, 8, 1, 0, 0]$ \\
& & 3 & $[0, 99, -1], [1, 1, 1]$ \\
\hline
$\multirow{4}{*}{{\rm CFI}({\rm reg}(10,3))}$ & \multirow{4}{*}{100} 
& {\color{red}2} &  $[0, 78, 7, 0, 0], [1, 12, 5, {\color{red}1}, 0], [2, 2, 7, 0, 0], [3, 8, 5, 0, 0]$  \\ 
& & 3 & $[0, 98, -1], [1, 2, -1]$ \\
\cline{3-4}
& & {\color{red}2} & $[0, 78, 7, 0, 0], [1, 12, 5, {\color{red}0}, 0], [2, 2, 7, 0, 0], [3, 8, 5, 0, 0]$ \\
& & 3 & $[0, 98, -1], [1, 2, -1]$ \\
\hline
$\multirow{4}{*}{{\rm CFI}({\rm reg}(10,3))}$ & \multirow{4}{*}{100} 
& {\color{red}2} &  $[0, {\color{red}78}, {\color{red}7}, 0, 0], [1, {\color{red}12}, 1, 0, 0], {\color{red}[2, 2, 7, 0, 0]}, [3, 8, 1, 0, 0]$  \\ 
& & 3 & $[0, 98, -1], [1, 2, -1]$ \\
\cline{3-4}
& & {\color{red}2} & $[0, {\color{red}76}, {\color{red}1}, 0, 0], [1, {\color{red}16}, 1, 0, 0], [3, 8, 1, 0, 0]$ \\
& & 3 & $[0, 98, -1], [1, 2, -1]$ \\
\hline
$\multirow{4}{*}{{\rm CFI}({\rm reg}(10,3))}$ & \multirow{4}{*}{100} 
& {\color{red}2} &  $[0, 78, 7, 0, 0], [1, 8, 1, {\color{red}1}, 0], [2, 10, 7, 0, 0], [4, 4, 5, 0, 0]$  \\ 
& & 3 & $[0, 99, -1], [1, 1, 1]$ \\
\cline{3-4}
& & {\color{red}2} & $[0, 78, 7, 0, 0], [1, 8, 1, {\color{red}0}, 0], [2, 10, 7, 0, 0], [4, 4, 5, 0, 0]$ \\
& & 3 & $[0, 99, -1], [1, 1, 1]$ \\
\hline
\end{tabular}
\caption{The $p$-adic symbols for quadratic form $A$ with respect to ${\rm CFI}({\rm reg}(10,3))$ by using Sage.}
\label{tab:p-adic_CFI2}
\end{table}
\clearpage

\begin{table}
\centering
\begin{tabular}{c||c|l|c}
\hline
\rowcolor{blue!10}
{\small Graph Type}  & $p$ & $p$-Adic Symbols & \#Graphs \\
\hline\hline
\multirow{2}{*}{srg(16, 6, 2, 2)}  & 2 & $[0, 6, 7, 0, 0], [3, 5, 7, 1, 7], [5, 4, 1, 0, 0]$ & \multirow{1}{*}{1}  \\  
\cline{2-4}
& 2 & $[0, 6, 7, 0, 0], [1, 1, 1, 1, 1], [3, 2, 7, 0, 0], [4, 2, 1, 1, 6], [5, 4, 1, 0, 0]$ & \multirow{1}{*}{1} \\
\cline{2-4}
\hline
\multirow{9}{*}{srg(25, 12, 5, 6)} & 2 & $[0, 12, 1, 0, 0], [1, 12, 1, 0, 0]$ & \multirow{3}{*}{10} \\
& 3 & $[0, 12, 1], [1, 12, 1]$ \\
& 5 & $[0, 12, -1], [1, 2, 1], [2, 10, -1]$ \\
\cline{2-4}
& 2 & $[0, 12, 5, 0, 0], [1, 12, 5, 0, 0]$ & \multirow{3}{*}{4} \\
& 3 & $[0, 12, -1], [1, 12, -1]$ \\
& 5 & $[0, 11, 1], [1, 4, 1], [2, 9, 1]$ \\
\cline{2-4}
& 2 & $[0, 12, 5, 0, 0], [1, 12, 5, 0, 0]$ & \multirow{3}{*}{1} \\
& 3 & $[0, 12, -1], [1, 12, -1]$ \\
& 5 & $[0, 9, 1], [1, 8, 1], [2, 7, 1]$ \\
\hline
\multirow{8}{*}{srg(26, 10, 3, 4)} 
& 2 & $[0, 12, 5, 0, 0], [2, 1, 1, 1, 1], [3, 12, 1, 0, 0]$ & \multirow{2}{*}{5} \\ 
& 13 & $[0, 14, -1], [1, 11, -1]$ \\
\cline{2-4}
& 2 & $[0, 12, 1, 0, 0], [2, 1, 1, 1, 1], [3, 12, 5, 0, 0]$ & \multirow{2}{*}{2} \\
& 13 & $[0, 14, 1], [1, 11, 1]$ \\
\cline{2-4}
& 2 & $[0, 12, 5, 0, 0], [2, 1, 5, 1, 5], [3, 12, 5, 0, 0]$ & \multirow{2}{*}{2} \\
& 13 & $[0, 14, -1], [1, 11, -1]$ \\
\cline{2-4}
& 2 & $[0, 12, 1, 0, 0], [2, 1, 5, 1, 5], [3, 12, 1, 0, 0]$ & \multirow{2}{*}{1} \\
& 13 & $[0, 14, 1], [1, 11, 1]$ \\
\hline
\multirow{4}{*}{srg(28, 12, 6, 4)} 
& 2 & $[0, 8, 5, 0, 0], [1, 12, 5, 0, 0], [3, 1, 5, 1, 5], [4, 6, 3, 0, 0]$ & \multirow{2}{*}{3} \\ 
& 7 & $[0, 8, 1], [1, 19, 1]$ \\
\cline{2-4}
& 2 & $[0, 6, 3, 0, 0], [1, 15, 3, 1, 1], [4, 6, 7, 0, 0]$ & \multirow{2}{*}{1} \\
& 7 & $[0, 8, 1], [1, 19, 1]$ \\
\hline
\multirow{3}{*}{srg(29, 14, 6, 7)} 
& 2 & $[0, 28, 5, 0, 0]$ & \multirow{3}{*}{41} \\ 
& 7 & $[0, 14, -1], [1, 14, -1]$ \\
& 29 & $[0, 15, 1], [1, 13, 1]$ \\
\hline
\multirow{20}{*}{srg(36, 14, 4, 6)} 
& 2 & $[0, 14, 3, 0, 0], [2, 9, 1, 1, 5], [3, 12, 1, 0, 0]$ & \multirow{2}{*}{73}  \\ 
& 3 & $[0, 14, -1], [1, 7, -1], [2, 2, 1], [3, 12, 1]$ \\
\cline{2-4}
& 2 & $[0, 14, 7, 0, 0], [2, 9, 5, 1, 5], [3, 12, 1, 0, 0]$ & \multirow{2}{*}{39}  \\ 
& 3 & $[0, 14, 1], [1, 7, 1], [2, 2, -1], [3, 12, -1]$ \\
\cline{2-4}
& 2 & $[0, 14, 3, 0, 0], [2, 9, 5, 1, 1], [3, 12, 5, 0, 0]$ & \multirow{2}{*}{36}  \\
& 3 & $[0, 14, -1], [1, 7, -1], [2, 2, 1], [3, 12, 1]$ \\
\cline{2-4}
& 2 & $[0, 12, 5, 0, 0], [1, 2, 7, 0, 0], [2, 11, 3, 1, 1], [3, 10, 3, 0, 0]$ &  \multirow{2}{*}{12} \\
& 3 & $[0, 14, -1], [1, 7, -1], [2, 2, 1], [3, 12, 1]$ \\
\cline{2-4}
& 2 & $[0, 12, 5, 0, 0], [1, 2, 7, 0, 0], [2, 11, 7, 1, 5], [3, 10, 7, 0, 0]$ & \multirow{2}{*}{8}  \\
& 3 & $[0, 14, -1], [1, 7, -1], [2, 2, 1], [3, 12, 1]$ \\
\cline{2-4}
& 2 & $[0, 14, 7, 0, 0], [2, 9, 1, 1, 1], [3, 12, 5, 0, 0]$ & \multirow{2}{*}{5}  \\
& 3 & $[0, 14, 1], [1, 7, 1], [2, 2, -1], [3, 12, -1]$ \\
\cline{2-4}
& 2 & $[0, 14, 7, 0, 0], [2, 9, 5, 1, 5], [3, 12, 1, 0, 0]$ & \multirow{2}{*}{2}  \\
& 3 & $[0, 13, -1], [1, 9, -1], [2, 1, -1], [3, 12, -1]$ \\
\cline{2-4}
& 2 & $[0, 14, 7, 0, 0], [2, 9, 5, 1, 5], [3, 12, 1, 0, 0]$ & \multirow{2}{*}{2}  \\
& 3 & $[0, 12, -1], [1, 9, -1], [2, 4, 1], [3, 10, 1]$ \\
\cline{2-4}
& 2 & $[0, 10, 3, 0, 0], [1, 4, 1, 0, 0], [2, 13, 5, 1, 1], [3, 8, 5, 0, 0]$ & \multirow{2}{*}{2}  \\
& 3 & $[0, 14, -1], [1, 7, -1], [2, 2, 1], [3, 12, 1]$ \\
\cline{2-4}
& 2 & $[0, 8, 5, 0, 0], [1, 6, 7, 0, 0], [2, 15, 3, 1, 1], [3, 6, 3, 0, 0]$ & \multirow{2}{*}{1}  \\
& 3 & $[0, 14, -1], [1, 7, -1], [2, 2, 1], [3, 12, 1]$ \\
\hline
\end{tabular}
\caption{The $p$-adic symbols for quadratic form of combinatorial Laplacian with respect to ${\rm srg}(n,k,\lambda,\mu)$ by using Sage.}
\label{tab:p-adic_srg_lap}
\end{table}
\clearpage

\begin{table}
\centering
\begin{tabular}{c||c|l|c}
\hline
\rowcolor{blue!10}
{\small Graph Type}  & $p$ & $p$-Adic Symbols & \#Graphs \\
\hline\hline
\multirow{12}{*}{srg(40, 12, 2, 4)} 
& 2 & $[0, 16, 5, 0, 0], [1, 8, 1, 0, 0], [3, 1, 3, 1, 3], [5, 14, 3, 0, 0]$ & \multirow{2}{*}{17} \\
& 5 & $[0, 16, -1], [1, 23, 1]$ \\ 
\cline{2-4}
& 2 & $[0, 14, 7, 0, 0], [1, 10, 3, 0, 0], [3, 1, 7, 1, 7], [4, 2, 7, 0, 0], [5, 12, 1, 0, 0]$ & \multirow{2}{*}{5} \\
& 5 & $[0, 16, -1], [1, 23, 1]$ \\ 
\cline{2-4}
& 2 & $[0, 14, 7, 0, 0], [1, 10, 3, 0, 0], [3, 1, 3, 1, 3], [4, 2, 3, 0, 0], [5, 12, 1, 0, 0]$ & \multirow{2}{*}{3} \\
& 5 & $[0, 16, -1], [1, 23, 1]$ \\ 
\cline{2-4}
& 2 & $[0, 12, 1, 0, 0], [1, 12, 5, 0, 0], [3, 1, 3, 1, 3], [4, 4, 5, 0, 0], [5, 10, 7, 0, 0]$ & \multirow{2}{*}{1} \\
& 5 & $[0, 16, -1], [1, 23, 1]$ \\ 
\cline{2-4}
& 2 & $[0, 12, 1, 0, 0], [1, 12, 5, 0, 0], [3, 1, 7, 1, 7], [4, 4, 1, 0, 0], [5, 10, 7, 0, 0]$ & \multirow{2}{*}{1} \\
& 5 & $[0, 16, -1], [1, 23, 1]$ \\ 
\cline{2-4}
& 2 & $[0, 10, 7, 0, 0], [1, 14, 3, 0, 0], [3, 1, 7, 1, 7], [4, 6, 7, 0, 0], [5, 8, 1, 0, 0]$ & \multirow{2}{*}{1} \\
& 5 & $[0, 16, -1], [1, 23, 1]$ \\ 
\hline
\multirow{27}{*}{srg(45, 12, 3, 3)} 
& 2 & $[0, 44, 5, 0, 0]$ & \multirow{3}{*}{19} \\
& 3 & $[0, 20, 1], [1, 4, -1], [2, 2, -1], [3, 18, -1]$ \\
& 5 & $[0, 21, -1], [1, 23, -1]$ \\
\cline{2-4}
& 2 & $[0, 44, 5, 0, 0]$ & \multirow{3}{*}{18} \\
& 3 & $[0, 19, -1], [1, 6, 1], [2, 1, -1], [3, 18, -1]$ \\
& 5 & $[0, 21, -1], [1, 23, -1]$ \\
\cline{2-4}
& 2 & $[0, 44, 5, 0, 0]$ & \multirow{3}{*}{13} \\
& 3 & $[0, 20, -1], [1, 4, 1], [2, 2, 1], [3, 18, 1]$ \\
& 5 & $[0, 21, -1], [1, 23, -1]$ \\
\cline{2-4}
& 2 & $[0, 44, 5, 0, 0]$ & \multirow{3}{*}{8} \\
& 3 & $[0, 19, 1], [1, 6, -1], [2, 1, 1], [3, 18, 1]$ \\
& 5 & $[0, 21, -1], [1, 23, -1]$ \\
\cline{2-4}
& 2 & $[0, 44, 5, 0, 0]$ & \multirow{3}{*}{6} \\
& 3 & $[0, 18, 1], [1, 6, -1], [2, 4, -1], [3, 16, -1]$ \\
& 5 & $[0, 21, -1], [1, 23, -1]$ \\
\cline{2-4}
& 2 & $[0, 44, 5, 0, 0]$ & \multirow{3}{*}{6} \\
& 3 & $[0, 17, 1], [1, 8, -1], [2, 3, 1], [3, 16, 1]$ \\
& 5 & $[0, 21, -1], [1, 23, -1]$ \\
\cline{2-4}
& 2 & $[0, 44, 5, 0, 0]$ & \multirow{3}{*}{3} \\
& 3 & $[0, 18, -1], [1, 6, 1], [2, 4, 1], [3, 16, 1]$ \\
& 5 & $[0, 21, -1], [1, 23, -1]$ \\
\cline{2-4}
& 2 & $[0, 44, 5, 0, 0]$ & \multirow{3}{*}{3} \\
& 3 & $[0, 17, -1], [1, 8, 1], [2, 3, -1], [3, 16, -1]$ \\
& 5 & $[0, 21, -1], [1, 23, -1]$ \\
\cline{2-4}
& 2 & $[0, 44, 5, 0, 0]$ & \multirow{3}{*}{2} \\
& 3 & $[0, 15, -1], [1, 10, 1], [2, 5, -1], [3, 14, -1]$ \\
& 5 & $[0, 21, -1], [1, 23, -1]$ \\
\hline
\end{tabular}
\caption{The $p$-adic symbols for quadratic form of combinatorial Laplacian with respect to ${\rm srg}(n,k,\lambda,\mu)$ by using Sage.}
\label{tab:p-adic_srg_lap_2}
\end{table}
\clearpage


\section{Codes}
\subsection{Magma Code for Checking Isomorphism of Quadratic Forms} \label{code:magma}
$~$
\begin{lstlisting}
function CheckQuadIso(A1, A2, dim)
    A1 := MatrixRing(IntegerRing(), dim) ! A1;
    A2 := MatrixRing(IntegerRing(), dim) ! A2;
    L1 := LatticeWithGram(A1);
    L2 := LatticeWithGram(A2);
    rslt := IsIsometric(L1, L2);
    return rslt;
end function;
\end{lstlisting}

\subsection{Sage Code for Computing $p$-Adic Symbols of Quadratic Forms} \label{code:sage_padic}
$~$
\begin{lstlisting}
def pAdic(A, dim):
    A = matrix(ZZ, dim, A);
    Q = QuadraticForm(ZZ, A);
    rslt = Q.CS_genus_symbol_list();
    return rslt;
\end{lstlisting}

\subsection{Sage Code for Checking Local Equivalence of Quadratic Forms} \label{code:sage_local}
$~$
\begin{lstlisting}
def CheckLocalIso(A1, A2, dim):
    A1 = matrix(ZZ, dim, A1);
    A2 = matrix(ZZ, dim, A2);
    Q1 = QuadraticForm(ZZ, A1);
    Q2 = QuadraticForm(ZZ, A2);
    rslt = Q1.is_locally_equivalent_to(Q2, check_primes_only=True);
    return rslt;
\end{lstlisting}

\end{document}